\newtheorem{definition}{Definition}[section]
\newtheorem{proposition}{Proposition}[section]
\newtheorem{theorem}{Theorem}[section]
\newtheorem{remark}{Remark}[section]
\newtheorem{corollary}{Corollary}[section]
\newtheorem{lemma}{Lemma}[section]
\begin{document}

\author{Nicoleta Aldea and Gheorghe Munteanu}
\title{On projective invariants of the complex Finsler spaces}
\date{}
\maketitle

\begin{abstract}
In this paper the projective curvature invariants of a complex Finsler space
are obtained. By means of these invariants the notion of complex Douglas
space is then defined. A special approach is devoted to obtain the
equivalence conditions that a complex Finsler space should be Douglas. It is
shown that any weakly K\"{a}hler Douglas space is a complex Berwald space. A
projective curvature invariant of Weyl type characterizes the complex
Berwald spaces. They must be either purely Hermitian of constant holomorphic
curvature or non purely Hermitian of vanish holomorphic curvature. The
locally projectively flat complex Finsler metrics are also studied.
\end{abstract}

\begin{flushleft}
\strut \textbf{2010 Mathematics Subject Classification:} 53B40, 53C60.

\textbf{Key words and phrases:} projectively related complex Finsler
metrics, projective curvature invariants, complex Douglas space, locally
projectively flat complex Finsler metric.
\end{flushleft}

\section{Introduction}

\setcounter{equation}{0}The study of projective real Finsler spaces was
initiated by L. Berwald, \cite{Bw1,Bw2}, and his studies mainly concern the
two dimensional Finsler spaces. Further substantial contributions on this
topic came later from Rapcs\'{a}k \cite{Ra}, Misra \cite{Mi} and,
especially, from Z. Szabo \cite{Sz1} and M. Matsumoto \cite{Map}. The
problem of projective Finsler spaces is strongly connected to projectively
related sprays, as Z. Shen pointed out in \cite{Sh1}. The topic of
projective real Finsler spaces continues to be of interest for some
projective invariants: Douglas curvature, Weyl curvature and others. The
exploration of these projective invariants leads to the special classes of
metrics such as the Douglas metrics and the Finsler metrics of scalar flag
curvature, (\cite{B-M1,B-M2,C-ZS,Li,B-C}, etc.).

Few general themes from projective real Finsler geometry are broached in
complex Finsler geometry, \cite{Al-Mu}. Two complex Finsler metrics $L$ and $%
\tilde{L}$, on a common underlying manifold $M$, are called projectively
related if any complex geodesic curve, in \cite{A-P}' s sense, of the first
is also complex geodesic curve for the second and the other way around. This
means that between the spray coefficients $G^i$ and $\tilde{G}^i$ there is a
so called projective change $\tilde{G}^i=G^i+B^i+P\eta ^i,$ where $P$ is a
smooth function on $T^{\prime }M$ with complex values and $B^i:=\frac 12(%
\tilde{\theta}^{*i}-\theta ^{*i}).$ Although the Chern-Finsler complex
nonlinear connection, with the local coefficients $N_j^i,$ is the main tool
in complex Finsler geometry (\cite{A-P, Mub}), in this study we use the
canonical complex nonlinear connection because it derives from a complex
spray, i.e. $\stackrel{c}{N_j^i}:=\dot{\partial}_jG^i$ and $G^i=\frac
12N_j^i\eta ^j.$

Using some ideas from the real case, our aim in the present paper is to
study the above mentioned projective change. It gives rise to projective
curvature invariants of Douglas and Weyl types. Associated to the canonical
complex nonlinear connection we have the complex linear connection of
Berwald type which is an important tool in our approach.

Subsequently, we have made an overview of the paper's content.

In \S 2, some preliminary properties of the $n$ - dimensional complex
Finsler spaces are stated. We prove that the complex Finsler spaces which
are weakly K\"{a}hler and generalized Berwald are complex Berwald spaces
(Theorem 2.1).

In \S 3, the structure equations satisfied by the connection form of the
complex linear connection of Berwald type are emphasized. Next, we derive
some of Bianchi identities which specify the relations among the covariant
derivatives of the curvature coefficients of this complex linear connection.

A first class of projective curvature invariants obtained by successive
vertical differentiations of the projective change is explored in \S 4. We
find three projective invariants of Douglas type and by means of them are
defined the complex Douglas spaces. The necessary and sufficient conditions
in which a complex Finsler space is Douglas are contained in Theorem 4.2.

The study of the weakly K\"{a}hler projective changes is more significant.
We prove that the weakly K\"{a}hler Douglas spaces are complex Berwald
spaces, (Theorem 5.2). A projective curvature invariant of Weyl type $%
W_{jkh}^{i}$, which has the same formal form as in the real case, is
obtained. It is vanishing in the K\"{a}hler context. For the complex Berwald
spaces another projective curvature invariant of Weyl type $W_{j\bar{k}%
h}^{i} $ is found. We show that $W_{j\bar{k}h}^{i}=0$ if and only if the
space is either purely Hermitian with the holomorphic curvature $\mathcal{K}%
_{F}$ equal to a constant value or non purely Hermitian with $\mathcal{K}%
_{F}=0$, (Theorem 5.4).

The last part of the paper, \S 6, is devoted to the locally projectively
flat complex Finsler metrics. The necessary and sufficient conditions for
the locally projectively flat complex Finsler metrics and other
characterizations are established in Theorems 6.2, 6.3 and Proposition 6.2.
Finally, the locally projectively flat complex Finsler metrics are
exemplified, better illustrating the interest for this work, (Theorem 6.4).

\section{Preliminaries}

\setlength\arraycolsep{3pt} \setcounter{equation}{0}Let $M$ be a
n-dimensional complex manifold and $z=(z^k)_{k=\overline{1,n}}$ be the
complex coordinates in a local chart. The complexified of the real tangent
bundle $T_CM$ splits into the sum of holomorphic tangent bundle $T^{\prime
}M $ and its conjugate $T^{\prime \prime }M$. The bundle $T^{\prime }M$ is
itself a complex manifold and the local coordinates in a local chart will be
denoted by $u=(z^k,\eta ^k)_{k=\overline{1,n}}.$ These are changed into $%
(z^{\prime k},\eta ^{\prime k})_{k=\overline{1,n}}$ by the rules $z^{\prime
k}=z^{\prime k}(z)$ and $\eta ^{\prime k}=\frac{\partial z^{\prime k}}{%
\partial z^l}\eta ^l.$

A \textit{complex Finsler space} is a pair $(M,F)$, where $F:T^{\prime
}M\rightarrow \mathbb{R}^{+}$ is a continuous function satisfying the
conditions:

\textit{i)} $L:=F^2$ is smooth on $\widetilde{T^{\prime }M}:=T^{\prime
}M\backslash \{0\};$

\textit{ii)} $F(z,\eta )\geq 0$, the equality holds if and only if $\eta =0;$

\textit{iii)} $F(z,\lambda \eta )=|\lambda |F(z,\eta )$ for $\forall \lambda
\in \mathbb{C}$;

\textit{iv)} the Hermitian matrix $\left( g_{i\bar{j}}(z,\eta )\right) $ is
positive definite, where $g_{i\bar{j}}:=\frac{\partial ^2L}{\partial \eta
^i\partial \bar{\eta}^j}$ is the fundamental metric tensor. Equivalently, it
means that the indicatrix is strongly pseudo-convex.

Consequently, from $iii$) we have $\frac{\partial L}{\partial \eta ^k}\eta
^k=\frac{\partial L}{\partial \bar{\eta}^k}\bar{\eta}^k=L,$ $\frac{\partial
g_{i\bar{j}}}{\partial \eta ^k}\eta ^k=\frac{\partial g_{i\bar{j}}}{\partial
\bar{\eta}^k}\bar{\eta}^k=0$ and $L=g_{i\bar{j}}\eta ^i\bar{\eta}^j.$

Roughly speaking, the geometry of a complex Finsler space consists of the
study of the geometric objects of the complex manifold $T^{\prime }M$
endowed with the Hermitian metric structure defined by $g_{i\bar{j}}.$

Therefore, the first step is to study sections of the complexified tangent
bundle of $T^{\prime }M,$ which is decomposed in the sum $T_C(T^{\prime
}M)=T^{\prime }(T^{\prime }M)\oplus T^{\prime \prime }(T^{\prime }M)$. Let $%
VT^{\prime }M\subset T^{\prime }(T^{\prime }M)$ be the vertical bundle,
locally spanned by $\{\frac \partial {\partial \eta ^k}\}$, and $VT^{\prime
\prime }M$ be its conjugate.

At this point, the idea of complex nonlinear connection, briefly $(c.n.c.),$
is an instrument in 'linearization' of this geometry. A $(c.n.c.)$ is a
supplementary complex subbundle to $VT^{\prime }M$ in $T^{\prime }(T^{\prime
}M)$, i.e. $T^{\prime }(T^{\prime }M)=HT^{\prime }M\oplus VT^{\prime }M.$
The horizontal distribution $H_uT^{\prime }M$ is locally spanned by $\{\frac
\delta {\delta z^k}=\frac \partial {\partial z^k}-N_k^j\frac \partial
{\partial \eta ^j}\}$, where $N_k^j(z,\eta )$ are the coefficients of the $%
(c.n.c.)$, i.e. they transform by a certain rule
\begin{equation}
N_j^{\prime i}\frac{\partial z^{\prime j}}{\partial z^k}=\frac{\partial
z^{\prime i}}{\partial z^j}N_k^j-\frac{\partial ^2z^{\prime i}}{\partial
z^j\partial z^k}\eta ^j.  \label{III.1.10}
\end{equation}
The pair $\{\delta _k:=\frac \delta {\delta z^k},\dot{\partial}_k:=\frac
\partial {\partial \eta ^k}\}$ will be called the adapted frame of the $%
(c.n.c.)$ which obey to the change rules $\delta _k=\frac{\partial z^{\prime
j}}{\partial z^k}\delta _j^{\prime }$ and $\dot{\partial}_k=\frac{\partial
z^{\prime j}}{\partial z^k}\dot{\partial}_j^{\prime }.$ By conjugation
everywhere we have obtained an adapted frame $\{\delta _{\bar{k}},\dot{%
\partial}_{\bar{k}}\}$ on $T_u^{\prime \prime }(T^{\prime }M).$ The dual
adapted bases are $\{dz^k,\delta \eta ^k\}$ and $\{d\bar{z}^k,\delta \bar{%
\eta}^k\}.$

Let us consider $T$ the natural tangent structure which behaves on $%
T^{\prime }(T^{\prime }M)$ by $T(\frac \partial {\partial z^k})=\frac
\partial {\partial \eta ^k}$ ; $T(\frac \partial {\partial \eta ^k})=0,$ and
it is globally defined, (see \cite{Mub}).

\begin{definition}
\cite{Mub}. A vector field $S\in T^{\prime }(T^{\prime }M)$ is a complex
spray if $T\circ S=\Gamma ,$ where $\Gamma =\eta ^k\frac \partial {\partial
z^k}$ is the complex Liouville vector field.
\end{definition}

Locally, this condition of complex spray can be expressed as follows
\begin{equation}
S=\eta ^k\frac \partial {\partial z^k}-2G^k(z,\eta )\frac \partial {\partial
\eta ^k}\;.  \label{III.1.20}
\end{equation}

Under the changes of complex coordinates on $T^{\prime }M,$ the coefficients
$G^k$ of the spray $S$ are transformed by the rule
\begin{equation}
2G^{\prime i}=2G^k\frac{\partial z^{\prime i}}{\partial z^k}-\frac{\partial
^2z^{\prime i}}{\partial z^j\partial z^k}\eta ^j\eta ^k.  \label{III.1.21}
\end{equation}

Between the notions of complex spray and $(c.n.c.)$ there exists an
interdependence, one determining the other. Differentiating (\ref{III.1.21})
with respect to $\eta ^j$ it follows that the functions $N_j^i:=\frac{%
\partial G^i}{\partial \eta ^j}$ satisfy the rule (\ref{III.1.10}), and
hence $N_j^i$ define a nonlinear connection. Conversely, any $(c.n.c.)$
determines a complex spray. Indeed, a simple computation shows that if $%
N_j^i $ are the coefficients of a $(c.n.c.)$ then $\frac 12N_j^i\;\eta ^j$
satisfy (\ref{III.1.21}) and hence, they define a complex spray. \

A $(c.n.c.)$ related only to the fundamental function of the complex Finsler
space $(M,F)$ is the so called Chern-Finsler $(c.n.c.),$ (cf. \cite{A-P}),
with the local coefficients $N_{j}^{i}:=g^{\overline{m}i}\frac{\partial g_{l%
\overline{m}}}{\partial z^{j}}\eta ^{l}.$ Further on $\delta _{k}$ is the
adapted frame of the Chern-Finsler $(c.n.c.).$ A Hermitian connection $D$,
of $(1,0)-$ type, which satisfies in addition $D_{JX}Y=JD_{X}Y,$ for all $X$
horizontal vectors and $J$ the natural complex structure of the manifold, is
the Chern-Finsler connection (\cite{A-P})$.$ It is locally given by the
following coefficients (cf. \cite{Mub}):
\begin{equation}
L_{jk}^{i}:=g^{\overline{l}i}\delta _{k}g_{j\overline{l}}=\dot{\partial}%
_{j}N_{k}^{i}\;\;;\;C_{jk}^{i}:=g^{\overline{l}i}\dot{\partial}_{k}g_{j%
\overline{l}}.  \label{1.3}
\end{equation}

Recall that $R_{j\overline{h}k}^i:=-\delta _{\overline{h}}L_{jk}^i-(\delta _{%
\overline{h}}N_k^l)C_{jl}^i$ are $h\bar{h}$ - curvatures coefficients of
Chern-Finsler connection. According to \cite{A-P}, p. 108, \cite{Mub}, p.
81, the \textit{holomorphic curvature} of the complex Finsler space $(M,F)$
in direction $\eta $ is
\begin{equation}
\mathcal{K}_F(z,\eta )=\frac 2{L^2}R_{\bar{r}j\bar{k}h}\bar{\eta}^r\eta ^j%
\bar{\eta}^k\eta ^h,  \label{1.8}
\end{equation}
where $R_{\bar{r}j\bar{k}h}:=R_{j\bar{k}h}^ig_{i\bar{r}}.$

In \cite{A-P}'s terminology, the complex Finsler space $(M,F)$ is \textit{%
strongly K\"{a}hler} iff $T_{jk}^i=0,$ \textit{K\"{a}hler}$\;$iff $%
T_{jk}^i\eta ^j=0$ and \textit{weakly K\"{a}hler }iff\textit{\ } $g_{i%
\overline{l}}T_{jk}^i\eta ^j\overline{\eta }^l=0,$ where $%
T_{jk}^i:=L_{jk}^i-L_{kj}^i.$ In \cite{C-S} it is proved that strongly
K\"{a}hler and K\"{a}hler notions actually coincide. We notice that in the
particular case of the complex Finsler metrics which come from Hermitian
metrics on $M,$ so-called \textit{purely Hermitian metrics} in \cite{Mub},
(i.e. $g_{i\overline{j}}=g_{i\overline{j}}(z)$)$,$ all those nuances of
K\"{a}hler are same. On the other hand, as in Aikou's work \cite{Ai}, a
complex Finsler space which is K\"{a}hler and $L_{jk}^i=L_{jk}^i(z)$ is
named \textit{complex Berwald} space.

In \cite{Mub} it is proved that the Chern-Finsler $(c.n.c.)$ does not
generally come from a complex spray except when the complex metric is weakly
K\"{a}hler. But, its local coefficients $N_j^i$ always determine a complex
spray with coefficients $G^i=\frac 12N_j^i\eta ^j.$ Further, $G^i$ induce a $%
(c.n.c.)$ denoted by $\stackrel{c}{N_j^i}:=\dot{\partial}_jG^i$ and called
\textit{canonical} in \cite{Mub}, where it is proved that it coincides with
Chern-Finsler $(c.n.c.)$ if and only if the complex Finsler metric is
K\"{a}hler. With respect to the canonical $(c.n.c.),$ we consider the frame $%
\{\stackrel{c}{\delta _k},\dot{\partial}_k\},$ where $\stackrel{c}{\delta _k}%
:=\frac \partial {\partial z^k}-\stackrel{c}{N_k^j}\dot{\partial}_j,$ and
its dual coframe $\{dz^k,\stackrel{c}{\delta }\eta ^k\},$ where $\stackrel{c%
}{\delta }\eta ^k:=d\eta ^k+\stackrel{c}{N_j^k}dz^j.$ Moreover, we associate
to the canonical $(c.n.c.)$ a complex linear connection of Berwald type $%
B\Gamma $ with its connection form
\begin{equation}
\omega _j^i(z,\eta )=G_{jk}^idz^k+G_{j\bar{k}}^id\bar{z}^k,  \label{1.4}
\end{equation}
where $G_{jk}^i:=\dot{\partial}_k\stackrel{c}{N_j^i}$ and $G_{j\bar{k}}^i:=%
\dot{\partial}_{\bar{k}}\stackrel{c}{N_j^i}.$ Note that the spray
coefficients perform $2G^i=N_j^i\eta ^j=\stackrel{c}{N_j^i}\eta
^j=G_{jk}^i\eta ^j\eta ^k=L_{jk}^i\eta ^j\eta ^k$.

An extension of the complex Berwald spaces, directly related to the $B\Gamma
$ connection, is called by us \textit{generalized Berwald} in \cite{Al-Mu}.
It is with the coefficients $G_{jk}^i$ depending only on the position $z,$
equivalently with either $\dot{\partial}_{\bar{h}}G^i=0$ or $B\Gamma $ is of
$(1,0)$ - type. Since in the K\"{a}hler case $G_{jk}^i=L_{jk}^i,$ any
complex Berwald space is generalized Berwald.

In Abate-Patrizio's sense, (\cite{A-P} p. 101), a complex geodesic curve is
given by $D_{T^h+\overline{T^h}}T^h=\theta ^{*}(T^h,\overline{T^h}),$ where $%
\theta ^{*}=g^{\bar{m}k}g_{i\bar{p}}(L_{\bar{j}\bar{m}}^{\bar{p}}-L_{\bar{m}%
\bar{j}}^{\bar{p}})dz^i\wedge d\bar{z}^j\otimes \delta _k,$ for which it is
proven in \cite{Mub} that $\theta ^{*k}=2g^{\bar{j}k}\stackrel{c}{\delta _{%
\bar{j}}}L$ and $\theta ^{*i}$ is vanishing if and only if the space is
weakly K\"{a}hler. Thus, the equations of a complex geodesic $z=z(s)$ of $%
(M,F)$, with $s$ a real parameter, in \cite{A-P}' s sense can be rewritten
as
\begin{equation}
\frac{d^2z^i}{ds^2}+2G^i(z(s),\frac{dz}{ds})=\theta ^{*i}(z(s),\frac{dz}{ds}%
)\;;\;i=\overline{1,n},  \label{40}
\end{equation}
where by $z^i(s),$ $i=\overline{1,n},$ we denote the coordinates along of
curve $z=z(s).$

Note that the functions $\theta ^{*i}$ are $(1,1)$ - homogeneous with
respect to $\eta ,$ i.e. $(\dot{\partial}_k\theta ^{*i})\eta ^k=\theta ^{*i}$
and $(\dot{\partial}_{\bar{k}}\theta ^{*i})\bar{\eta}^k=\theta ^{*i}.$

Next, we emphasize some properties of the complex Finsler spaces.

\begin{lemma}
Let $(M,F)$ be a complex Finsler space. Then, $(\dot{\partial}_{\bar{k}%
}G^i)\eta _i=0,$ where $\eta _i:=\dot{\partial}_iL.$
\end{lemma}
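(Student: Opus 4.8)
The plan is to express $G^{i}$ through the Chern--Finsler coefficients by means of the identity $2G^{i}=N_{j}^{i}\eta ^{j}$ recorded above, with $N_{j}^{i}=g^{\bar{m}i}\frac{\partial g_{l\bar{m}}}{\partial z^{j}}\eta ^{l}$, and then to exploit the $0$-homogeneity of $g_{i\bar{j}}$ in $\eta $ together with the elementary identity $\eta _{i}=g_{i\bar{j}}\bar{\eta}^{j}$. First I would collect the homogeneity consequences I shall use. Differentiating $L=g_{i\bar{j}}\eta ^{i}\bar{\eta}^{j}$ and using $(\dot{\partial}_{k}g_{i\bar{j}})\eta ^{k}=0$ — after interchanging the third-order partials of $L$ so that $\eta $ contracts the index that is actually differentiated — yields $\eta _{i}=g_{i\bar{j}}\bar{\eta}^{j}$; the conjugate computation gives $\dot{\partial}_{\bar{k}}L=g_{l\bar{k}}\eta ^{l}$; and applying $\dot{\partial}_{\bar{k}}$ to $\eta _{i}=g_{i\bar{j}}\bar{\eta}^{j}$, using $(\dot{\partial}_{\bar{k}}g_{i\bar{j}})\bar{\eta}^{j}=0$, gives $\dot{\partial}_{\bar{k}}\eta _{i}=g_{i\bar{k}}$.

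Next I would put $2G^{i}\eta _{i}$ into closed form. Contracting the Chern--Finsler coefficients with the metric, $N_{j}^{i}g_{i\bar{k}}=\frac{\partial g_{l\bar{k}}}{\partial z^{j}}\eta ^{l}$, whence on one hand $2G^{i}g_{i\bar{k}}=\frac{\partial g_{l\bar{k}}}{\partial z^{j}}\eta ^{l}\eta ^{j}$, and on the other $2G^{i}\eta _{i}=2G^{i}g_{i\bar{m}}\bar{\eta}^{m}=\frac{\partial g_{l\bar{m}}}{\partial z^{j}}\eta ^{l}\eta ^{j}\bar{\eta}^{m}$. Since the fibre coordinates $\eta ^{l},\bar{\eta}^{m}$ are independent of $z$, this last expression equals $\eta ^{j}\frac{\partial }{\partial z^{j}}\big(g_{l\bar{m}}\eta ^{l}\bar{\eta}^{m}\big)=\eta ^{j}\frac{\partial L}{\partial z^{j}}$.

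Finally I would apply $\dot{\partial}_{\bar{k}}$ to $2G^{i}\eta _{i}=\eta ^{j}\frac{\partial L}{\partial z^{j}}$. On the left, $\dot{\partial}_{\bar{k}}(2G^{i}\eta _{i})=(\dot{\partial}_{\bar{k}}2G^{i})\eta _{i}+2G^{i}\dot{\partial}_{\bar{k}}\eta _{i}=(\dot{\partial}_{\bar{k}}2G^{i})\eta _{i}+2G^{i}g_{i\bar{k}}$. On the right, since $\frac{\partial }{\partial z^{j}}$ and $\dot{\partial}_{\bar{k}}$ commute and $\dot{\partial}_{\bar{k}}L=g_{l\bar{k}}\eta ^{l}$, it equals $\eta ^{j}\frac{\partial }{\partial z^{j}}(g_{l\bar{k}}\eta ^{l})=\frac{\partial g_{l\bar{k}}}{\partial z^{j}}\eta ^{l}\eta ^{j}$, which is exactly the value of $2G^{i}g_{i\bar{k}}$ obtained above. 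Hence the two terms not involving $\dot{\partial}_{\bar{k}}G$ cancel, leaving $(\dot{\partial}_{\bar{k}}2G^{i})\eta _{i}=0$, i.e. $(\dot{\partial}_{\bar{k}}G^{i})\eta _{i}=0$. I expect no genuine obstacle: the argument is pure bookkeeping with the $\eta $- and $\bar{\eta}$-homogeneity relations, the only point demanding care being the use of the symmetry of the third-order partials of $L$, so that each homogeneity identity is applied to the index that is actually contracted.
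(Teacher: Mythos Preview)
Your proof is correct and follows essentially the same route as the paper. Both arguments rest on the identity $2G^{i}g_{i\bar{j}}=\frac{\partial g_{l\bar{j}}}{\partial z^{k}}\eta^{l}\eta^{k}$ together with the homogeneity relation $(\dot{\partial}_{\bar{k}}g_{i\bar{j}})\bar{\eta}^{j}=0$; the only cosmetic difference is that you first contract with $\bar{\eta}^{j}$ (packaging the result as $2G^{i}\eta_{i}=\eta^{j}\frac{\partial L}{\partial z^{j}}$) and then apply $\dot{\partial}_{\bar{k}}$, whereas the paper applies $\dot{\partial}_{\bar{k}}$ first and contracts with $\bar{\eta}^{j}$ afterward.
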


\begin{proof}
It results differentiating $G^ig_{i\bar{j}}=\frac 12\frac{%
\partial g_{h\bar{j}}}{\partial z^s}\eta ^h\eta ^s$ with respect to $\bar{%
\eta}^k$ and then contracting on it by $\bar{\eta}^j.$
\end{proof}

Also, it is necessary to compute

$\dot{\partial}_k\theta ^{*i}=2\dot{\partial}_k(g^{\bar{j}i}\stackrel{c}{%
\delta _{\bar{j}}}L)=-2g^{\bar{j}l}g^{\bar{m}i}(\dot{\partial}_kg_{l\bar{m}%
})(\stackrel{c}{\delta _{\bar{j}}}L)+2g^{\bar{j}i}\dot{\partial}_k(\stackrel{%
c}{\delta _{\bar{j}}}L)$

$=-\theta ^{*l}C_{kl}^i+2g^{\bar{j}i}\dot{\partial}_k[\frac{\partial L}{%
\partial \bar{z}^j}-\stackrel{c}{N_{\bar{j}}^{\bar{r}}}(\dot{\partial}_{\bar{%
r}}L)]$

$=-\theta ^{*l}C_{kl}^i+2g^{\bar{j}i}[\frac{\partial ^2L}{\partial \eta
^k\partial \bar{z}^j}-(\dot{\partial}_k\stackrel{c}{N_{\bar{j}}^{\bar{r}}})(%
\dot{\partial}_{\bar{r}}L)-\stackrel{c}{N_{\bar{j}}^{\bar{r}}}g_{k\bar{r}}].$

Now, using Lemma 2.1 and $\frac{\partial ^2L}{\partial \eta ^k\partial \bar{z%
}^j}=N_{\bar{j}}^{\bar{r}}g_{k\bar{r}}$ we obtain

$(\dot{\partial}_k\stackrel{c}{N_{\bar{j}}^{\bar{r}}})(\dot{\partial}_{\bar{r%
}}L)=(\dot{\partial}_k\stackrel{c}{N_{\bar{j}}^{\bar{r}}})\bar{\eta}_r=[\dot{%
\partial}_{\bar{j}}(\dot{\partial}_kG^{\bar{r}})]\bar{\eta}_r=\dot{\partial}%
_{\bar{j}}[(\dot{\partial}_kG^{\bar{r}})\bar{\eta}_r]-(\dot{\partial}_kG^{%
\bar{r}})(\dot{\partial}_{\bar{j}}\bar{\eta}_r)$

$=-(\dot{\partial}_kG^{\bar{r}})C_{l\bar{r}\bar{j}}\eta ^l,$ where $\bar{\eta%
}_r:=\dot{\partial}_{\bar{r}}L$ and $C_{l\bar{r}\bar{j}}\eta ^l:=\dot{%
\partial}_{\bar{j}}\bar{\eta}_r.$

Therefore,
\begin{equation}
\dot{\partial}_k\theta ^{*i}=-\theta ^{*l}C_{kl}^i+2g^{\bar{j}i}[(N_{\bar{j}%
}^{\bar{r}}-\stackrel{c}{N_{\bar{j}}^{\bar{r}}})g_{k\bar{r}}+(\dot{\partial}%
_kG^{\bar{r}})C_{l\bar{r}\bar{j}}\eta ^l].  \label{122}
\end{equation}

\begin{theorem}
Let $(M,F)$ be a complex Finsler space which is weakly K\"{a}hler and
generalized Berwald. Then it is a complex Berwald space.
\end{theorem}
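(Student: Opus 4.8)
The plan is to read off everything from formula (\ref{122}) for $\dot{\partial}_k\theta^{*i}$, which was just established, combined with the two hypotheses. First I would invoke the weakly K\"{a}hler assumption: by the result of \cite{Mub} recalled above, a complex Finsler space is weakly K\"{a}hler if and only if $\theta^{*i}\equiv 0$ on $\widetilde{T^{\prime }M}$. Substituting $\theta^{*i}=0$, and hence $\theta^{*l}=0$, into (\ref{122}) annihilates the left-hand side together with the term $-\theta^{*l}C_{kl}^i$, leaving
\[
g^{\bar{j}i}\bigl[(N_{\bar{j}}^{\bar{r}}-\stackrel{c}{N_{\bar{j}}^{\bar{r}}})g_{k\bar{r}}+(\dot{\partial}_kG^{\bar{r}})C_{l\bar{r}\bar{j}}\eta^l\bigr]=0 .
\]

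Next I would bring in the generalized Berwald hypothesis, which by definition means $\dot{\partial}_{\bar{h}}G^i=0$ for all indices. Taking complex conjugates gives $\dot{\partial}_hG^{\bar{i}}=0$, so the term $(\dot{\partial}_kG^{\bar{r}})C_{l\bar{r}\bar{j}}\eta^l$ drops out. What survives is $g^{\bar{j}i}(N_{\bar{j}}^{\bar{r}}-\stackrel{c}{N_{\bar{j}}^{\bar{r}}})g_{k\bar{r}}=0$, and since both $(g^{\bar{j}i})$ and $(g_{k\bar{r}})$ are nondegenerate this forces $N_{\bar{j}}^{\bar{r}}=\stackrel{c}{N_{\bar{j}}^{\bar{r}}}$, equivalently, by conjugation, $N_j^i=\stackrel{c}{N_j^i}$.

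Thus the canonical $(c.n.c.)$ coincides with the Chern-Finsler $(c.n.c.)$, and by the characterization recalled from \cite{Mub} this means exactly that $(M,F)$ is K\"{a}hler. To conclude, I would use that in the K\"{a}hler case $G_{jk}^i=L_{jk}^i$; since the space is generalized Berwald, $G_{jk}^i$ depends only on the position $z$, hence so does $L_{jk}^i$. A K\"{a}hler space with $L_{jk}^i=L_{jk}^i(z)$ is by definition a complex Berwald space, which finishes the proof.

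I do not expect a real obstacle: the one nontrivial computation, formula (\ref{122}), is already carried out in the preliminaries, so the argument is essentially a matter of feeding in the two hypotheses and reading off the conclusion. The only points requiring a little care are the passage from $\dot{\partial}_{\bar{h}}G^i=0$ to $\dot{\partial}_hG^{\bar{i}}=0$ by conjugation and the nondegeneracy step that strips off the metric factors. Conceptually the content of the theorem is precisely that pairing the weak K\"{a}hler condition with the generalized Berwald condition is rigid enough to upgrade ``weakly K\"{a}hler'' all the way to ``K\"{a}hler''.
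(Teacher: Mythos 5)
Your argument is correct and follows exactly the paper's own proof: substitute the weakly K\"{a}hler condition ($\theta^{*i}=0$) and the generalized Berwald condition ($\dot{\partial}_{\bar h}G^i=0$, conjugated) into (\ref{122}), strip the nondegenerate metric factors to conclude $N_j^i=\stackrel{c}{N_j^i}$ (K\"{a}hler), and then combine K\"{a}hlerity with $G_{jk}^i=L_{jk}^i(z)$ to get complex Berwald. The only difference is that you spell out the intermediate cancellations that the paper compresses into one sentence.
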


\begin{proof}
Under given assumptions, the relation (\ref{122}) is $2g^{%
\bar{j}i}(N_{\bar{j}}^{\bar{r}}-\stackrel{c}{N_{\bar{j}}^{\bar{r}}})g_{k\bar{%
r}}=0,$ which contracted by $\frac 12g_{i\bar{m}}g^{\bar{s}k}$ gives $N_{%
\bar{m}}^{\bar{s}}-\stackrel{c}{N_{\bar{m}}^{\bar{s}}}=0,$ i.e. $F$ is
K\"{a}hler. This, together with the statement of generalized Berwald, proves
our claim.
\end{proof}

\section{Curvature forms and Bianchi identities}

\setlength\arraycolsep{3pt} \setcounter{equation}{0}We shall use the complex
linear connection of Berwald type $B\Gamma $ as our main tool to study the
projective geometry of the complex Finsler manifolds. The connection form of
$B\Gamma $ satisfy the following structure equations
\begin{equation}
d(dz^i)-dz^k\wedge \omega _k^i=h\Omega ^i\;;\;\;d(\stackrel{c}{\delta }\eta
^i)-\stackrel{c}{\delta }\eta ^k\wedge \omega _k^i=v\Omega ^i\;;\;\;d\omega
_j^i-\omega _j^k\wedge \omega _k^i=\Omega _j^i,  \label{II.7}
\end{equation}
and their conjugates, where $d$ is exterior differential with respect to the
canonical $(c.n.c.).$

Since
\begin{eqnarray*}
d(\stackrel{c}{\delta }\eta ^i) &=&d\stackrel{c}{N_j^i}\wedge dz^j=\frac
12K_{jk}^idz^k\wedge dz^j+\Theta _{j\bar{k}}^id\bar{z}^k\wedge dz^j \\
&&+G_{jk}^i\stackrel{c}{\delta }\eta ^k\wedge dz^j+G_{j\bar{k}}^i\stackrel{c%
}{\delta }\bar{\eta}^k\wedge dz^j
\end{eqnarray*}
and $G_{jk}^i=G_{kj}^i,$ the torsion and curvature forms are
\begin{eqnarray*}
h\Omega ^i &=&-G_{j\bar{k}}^idz^j\wedge d\bar{z}^k\;;\;\; \\
v\Omega ^i &=&-\frac 12K_{jk}^idz^j\wedge dz^k-\Theta _{j\bar{k}%
}^idz^j\wedge d\bar{z}^k-G_{j\bar{k}}^idz^j\wedge \stackrel{c}{\delta }\bar{%
\eta}^k-G_{j\bar{k}}^i\stackrel{c}{\delta }\eta ^j\wedge d\bar{z}^k\;; \\
\Omega _j^i &=&-\frac 12K_{jkh}^idz^k\wedge dz^h-\frac 12K_{j\bar{k}\bar{h}%
}^id\bar{z}^k\wedge d\overline{z}^h+K_{j\overline{h}k}^idz^k\wedge d%
\overline{z}^h \\
&&-G_{jkh}^idz^k\wedge \stackrel{c}{\delta }\eta ^h-G_{j\bar{k}\bar{h}}^id%
\bar{z}^k\wedge \stackrel{c}{\delta }\bar{\eta}^h-G_{j\overline{h}%
k}^idz^k\wedge \stackrel{c}{\delta }\overline{\eta }^h+G_{j\overline{h}k}^i%
\stackrel{c}{\delta }\eta ^k\wedge d\overline{z}^h,
\end{eqnarray*}
where

$K_{jk}^i:=\stackrel{c}{\delta _k}\stackrel{c}{N_j^i}-\stackrel{c}{\delta _j}%
\stackrel{c}{N_k^i}\;;\;\Theta _{j\bar{k}}^i:=\stackrel{c}{\delta _{\bar{k}}}%
\stackrel{c}{N_j^i};$ and

$K_{jkh}^i:=\stackrel{c}{\delta _h}G_{jk}^i-\stackrel{c}{\delta _k}%
G_{jh}^i+G_{jk}^lG_{lh}^i-G_{jh}^lG_{lk}^i$ ;

$K_{j\bar{k}\bar{h}}^i:=\stackrel{c}{\delta _{\bar{h}}}G_{j\bar{k}}^i-%
\stackrel{c}{\delta _{\bar{k}}}G_{j\bar{h}}^i+G_{j\bar{k}}^lG_{l\bar{h}%
}^i-G_{j\bar{h}}^lG_{l\bar{k}}^i$ ;

$K_{j\bar{k}h}^i:=\stackrel{c}{\delta _h}G_{j\bar{k}}^i-\stackrel{c}{\delta
_{\bar{k}}}G_{jh}^i+G_{j\bar{k}}^lG_{lh}^i-G_{jh}^lG_{l\bar{k}}^i$ are $hh$-$%
,$ $\bar{h}\bar{h}$- and $h\bar{h}$- curvature tensors, respectively;

$G_{jkh}^i:=\dot{\partial}_hG_{jk}^i\;;\;G_{j\bar{k}\bar{h}}^i=\dot{\partial}%
_{\bar{h}}G_{j\bar{k}}^i\;;\;G_{j\bar{k}h}^i:=\dot{\partial}_hG_{j\bar{k}}^i$
are $hv$-, $\bar{h}\bar{v}$- and $h\bar{v}$- curvature tensors,
respectively. Moreover, they have properties
\begin{eqnarray*}
K_{jkh}^i &=&\dot{\partial}_jK_{kh}^i\;;\;K_{jkh}^i\eta ^j=K_{kh}^i\;;\;K_{j%
\bar{k}\bar{h}}^i+K_{j\bar{h}\bar{k}}^i=0; \\
G_{j\bar{k}h}^i\eta ^j &=&G_{h\bar{k}}^i\;;\;G_{j\bar{k}\bar{h}}^i\bar{\eta}%
^h=-G_{j\bar{k}}^i.
\end{eqnarray*}

Note that we preferred to denote by $K_{jkh}^{i}$ the horizontal curvature
tensors of $B\Gamma $, instead of classical real notation $R_{jkh}^{i}$. In
this way, we avoid any confusion with the horizontal curvatures coefficients
of the Chern-Finsler connection from (\ref{1.8}).

Taking the exterior differential of the third structure equation from (\ref
{II.7}), it results
\begin{equation}
-\Omega _{j}^{l}\wedge \omega _{l}^{i}+\omega _{j}^{l}\wedge \Omega
_{l}^{i}=d\Omega _{j}^{i},  \label{II.2.5'}
\end{equation}
which leads to sixteen Bianchi identities. We mention here only some of
these, which are needed for our proposed study
\begin{eqnarray*}
\dot{\partial}_{r}G_{jkh}^{i} &=&\dot{\partial}_{h}G_{jkr}^{i}\;;\;\dot{%
\partial}_{r}G_{jkh}^{i}=\dot{\partial}_{h}G_{jkr}^{i}\;;\;\dot{\partial}%
_{r}G_{j\overline{k}h}^{i}=\dot{\partial}_{h}G_{j\overline{k}r}^{i}; \\
\dot{\partial}_{\bar{r}}G_{jkh}^{i} &=&\dot{\partial}_{h}G_{j\overline{r}%
k}^{i}\;;\;\dot{\partial}_{r}G_{j\bar{k}\bar{h}}^{i}=\dot{\partial}_{\bar{h}%
}G_{j\overline{k}r}^{i}\;;\;\dot{\partial}_{\bar{r}}G_{j\bar{k}\bar{h}}^{i}=%
\dot{\partial}_{\bar{h}}G_{j\bar{k}\bar{r}}^{i}\;;\;\dot{\partial}_{\bar{r}%
}G_{j\overline{h}k}^{i}=\dot{\partial}_{\bar{h}}G_{j\overline{r}k}^{i}.
\end{eqnarray*}
In the generalized Berwald case the following identities are true:
\[
\dot{\partial}_{r}K_{jkh}^{i}=0\;;\;\dot{\partial}_{\bar{r}%
}K_{jkh}^{i}=0\;;\;\dot{\partial}_{r}K_{j\overline{k}h}^{i}=0\;;\;\dot{%
\partial}_{\bar{r}}K_{j\overline{k}h}^{i}=0
\]
and for complex Berwald spaces we get
\begin{equation}
K_{j\bar{r}k|\bar{h}}^{i}=K_{j\bar{h}k|\bar{r}}^{i}\;;\;K_{j\overline{r}%
k|h}^{i}=K_{j\overline{r}h|k}^{i},  \label{111}
\end{equation}
where we denoted by '$_{|k}$ ' the horizontal covariant derivative with
respect to Chern-Finsler connection.

\section{ Projective invariants of a complex Finsler space}

\setlength\arraycolsep{3pt} \setcounter{equation}{0}Let $\tilde{L}$ be
another complex Finsler metric on the underlying manifold $M.$ Corresponding
to the metric $\tilde{L}$, we have the spray coefficients $\tilde{G}^i$ and
the functions $\tilde{\theta}^{*i}$. The complex Finsler metrics $L$ and $%
\tilde{L}$ on the manifold $M$, are called \textit{projectively related} if
these have the same complex geodesics as point sets. This means that for any
complex geodesic $z=z(s)$ of $(M,L)$ there is a transformation of its
parameter $s$, $\tilde{s}=\tilde{s}(s),$ with $\frac{d\tilde{s}}{ds}>0,$
such that $z=z(\tilde{s}(s))$ is a geodesic of $(M,\tilde{L})$, and
conversely.

\begin{theorem}
\cite{Al-Mu}. Let $L$ and $\tilde{L}$ be complex Finsler metrics on the
manifold $M$. Then $L$ and $\tilde{L}$ are projectively related if and only
if there is a smooth function $P$ on $T^{\prime }M$ with complex values,
such that
\begin{equation}
\tilde{G}^i=G^i+B^i+P\eta ^i;\;i=\overline{1,n},  \label{12'}
\end{equation}
where $B^i:=\frac 12(\tilde{\theta}^{*i}-\theta ^{*i}).$
\end{theorem}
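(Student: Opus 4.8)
The plan is to prove the equivalence by examining what it means for the two sprays to share the same geodesics as point sets, translating this into an algebraic relation between the spray coefficients $G^i$ and $\tilde G^i$, and then isolating the structural content using the homogeneity properties recorded in the preliminaries. First I would start from the geodesic equations (\ref{40}) for $(M,L)$, namely $\frac{d^2z^i}{ds^2}+2G^i=\theta^{*i}$, and the analogous equations for $(M,\tilde L)$ written in the reparametrized arc $\tilde s$. A curve that is a geodesic for both, after the parameter change $\tilde s=\tilde s(s)$ with $\frac{d\tilde s}{ds}>0$, must satisfy both systems simultaneously; substituting $\frac{dz^i}{d\tilde s}=\frac{dz^i}{ds}\big/\frac{d\tilde s}{ds}$ and computing $\frac{d^2z^i}{d\tilde s^2}$ by the chain rule produces, after using the $2$-homogeneity of $G^i$ and the $(1,1)$-homogeneity of $\theta^{*i}$ in $\eta$, a relation of the form $2\tilde G^i-\tilde\theta^{*i} = \big(\tfrac{d\tilde s}{ds}\big)^{-2}\big(2G^i-\theta^{*i}\big) + (\text{something})\,\eta^i$, where the coefficient of $\eta^i$ absorbs the $\frac{d^2\tilde s}{ds^2}$ term and the $1$-homogeneous leftover. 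Setting $B^i:=\tfrac12(\tilde\theta^{*i}-\theta^{*i})$ and writing $\lambda:=\big(\tfrac{d\tilde s}{ds}\big)^{-2}$, this rearranges to $\tilde G^i = \lambda G^i + B^i + (\text{scalar})\,\eta^i + \tfrac{\lambda-1}{2}(\tilde\theta^{*i}+\theta^{*i}) - \dots$; the point to extract is that $\tilde G^i - G^i - B^i$ is proportional to $\eta^i$ with a scalar factor $P$, once one checks that the discrepancy term is itself $1$-homogeneous in $\eta$ and hence, being forced to equal a multiple of the position-independent direction, yields a well-defined function $P$ on $T'M$.

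The cleaner route, which I would actually carry out, is to invoke the known characterization of projectively related sprays: two complex sprays $S$ and $\tilde S$ have the same (unparametrized) geodesics iff $\tilde S - S$ is ``vertical along the Liouville field'', i.e. $2\tilde G^i - 2G^i = 2P\eta^i$ for a scalar $P$, because the geodesic spray is the restriction of $S$ to the geodesic foliation and a reparametrization changes $S$ only by adding a multiple of the Liouville/dilation vector field. In the Finsler case here the sprays governing the geodesics are not $G^i$ and $\tilde G^i$ themselves but $G^i-\tfrac12\theta^{*i}$ and $\tilde G^i-\tfrac12\tilde\theta^{*i}$, as one reads off from (\ref{40}); applying the spray criterion to \emph{these} gives $\big(\tilde G^i-\tfrac12\tilde\theta^{*i}\big)-\big(G^i-\tfrac12\theta^{*i}\big)=P\eta^i$, which is exactly $\tilde G^i=G^i+B^i+P\eta^i$ with $B^i=\tfrac12(\tilde\theta^{*i}-\theta^{*i})$. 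For the converse, if (\ref{12'}) holds then subtracting the two geodesic systems shows that any geodesic of one satisfies the other up to the term $2P\eta^i=2P\frac{dz^i}{ds}$ on the right, which is precisely the form of the extra term produced by a parameter change, so one solves the ODE $\frac{d^2\tilde s}{ds^2}=2P(z(s),\dot z(s))\frac{d\tilde s}{ds}$ (a linear first-order ODE in $\frac{d\tilde s}{ds}$) to recover $\tilde s(s)$ with $\frac{d\tilde s}{ds}=\exp\!\big(2\!\int P\big)>0$.

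The main obstacle is handling the $\theta^{*i}$ terms carefully: in complex Finsler geometry the geodesics are not geodesics of the spray $G^i$ but of the ``deformed'' object $G^i-\tfrac12\theta^{*i}$, and one must verify that this deformed object still behaves like a spray for the purpose of the reparametrization argument — in particular that $\theta^{*i}$ being $(1,1)$-homogeneous in $\eta$ (so that $\theta^{*i}$ is $1$-homogeneous in the \emph{real} parameter direction $\frac{dz}{ds}$, since $\dot\partial_k\theta^{*i}\eta^k+\dot\partial_{\bar k}\theta^{*i}\bar\eta^k=2\theta^{*i}$) interacts correctly with the chain-rule computation. Once the homogeneity bookkeeping is done, the scalar $P$ emerges as $P=\tfrac12\dot\partial_k(\tilde G^k-G^k-B^k)\big/\eta$-type contraction, well-defined because the left side of (\ref{12'}) minus $B^i$ is forced to be a $1$-homogeneous vertical deformation proportional to $\eta^i$; I would close the argument by noting that $P$ defined this way is smooth on $\widetilde{T'M}$ and extends continuously, and that the parameter change reconstructed from $P$ is strictly increasing, giving both directions of the equivalence.
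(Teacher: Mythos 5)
First, a caveat: the paper does not actually prove this statement --- Theorem 4.1 is quoted from the manuscript \cite{Al-Mu} without proof --- so there is no in-paper argument to compare yours against line by line. Judged on its own terms, your outline follows the natural route (the one the real-Finsler analogue suggests, and presumably the one \cite{Al-Mu} takes): read off from (\ref{40}) that the geodesics are governed by $2G^i-\theta^{*i}$ rather than by $G^i$, and run the reparametrization argument on that object. Your first-paragraph computation is in fact cleaner than you make it sound: under $\eta\mapsto\eta/\tilde s'$ with $\tilde s'>0$ real, $G^i$ scales by $(\tilde s')^{-2}$ by $(2,0)$-homogeneity and $\theta^{*i}$ scales by the same factor by $(1,1)$-homogeneity, so after multiplying the $\tilde L$-geodesic equation by $(\tilde s')^{2}$ all the $\lambda$-factors cancel and one gets exactly $2\tilde G^i=2G^i+(\tilde\theta^{*i}-\theta^{*i})+\frac{\tilde s''}{\tilde s'}\eta^i$ along the lifted geodesic; there is no residual ``$\frac{\lambda-1}{2}(\tilde\theta^{*i}+\theta^{*i})$'' term to worry about. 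Since every $(z,\eta)\in\widetilde{T'M}$ is an initial condition of a geodesic, and $P$ can be recovered smoothly by contracting $\tilde G^i-G^i-B^i$ with $\eta_i$ and dividing by $L=\eta^i\eta_i$, the forward direction is sound.

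Two genuine gaps remain. (i) Your ``cleaner route'' invokes ``the known characterization of projectively related sprays'' for the objects $G^i-\tfrac12\theta^{*i}$; but these are \emph{not} complex sprays in the paper's sense --- $\theta^{*i}$ is $(1,1)$-homogeneous while spray coefficients must be $(2,0)$-homogeneous --- so no off-the-shelf spray theorem applies, and the reparametrization computation has to be carried out by hand (which is exactly what your first paragraph does; the second paragraph is therefore not an independent shortcut but a restatement of the claim). (ii) More seriously, the converse as you sketch it does not close: the theorem allows $P$ to be \emph{complex}-valued (and by the paper's own refinement (\ref{12''}), $P=S-\tfrac12Q$ with $S$ of type $(1,0)$ and $Q$ of type $(0,1)$, it genuinely is complex in general), yet your reconstruction $\frac{d\tilde s}{ds}=\exp\bigl(2\int P\bigr)$ must produce a \emph{real}, strictly increasing parameter change. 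For complex $P$ this exponential is not positive real, so either one must show that $P$ restricted to the lift $(z(s),\dot z(s))$ of a geodesic is real, or one must work with complex geodesics in Abate--Patrizio's sense as holomorphic curves with complex reparametrizations; your proposal addresses neither, and this is precisely the point where the complex theory diverges from the real one.
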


The relations (\ref{12'}) between the spray coefficients $\tilde{G}^{i}$ and
$G^{i}$ of the projectively related complex Finsler metrics $L$ and $\tilde{L%
}$ is called \textit{projective change. }An equivalent form of this, (see
Lemma 3.2, \cite{Al-Mu}) is
\begin{equation}
\tilde{G}^{i}=G^{i}+S\eta ^{i}\mbox{   and   }\tilde{\theta}^{\ast i}=\theta
^{\ast i}+Q\eta ^{i};\;i=\overline{1,n},  \label{12''}
\end{equation}
where $S:=(\dot{\partial}_{k}P)\eta ^{k}$ is $(1,0)$ - homogeneous, $Q:=-2(%
\dot{\partial}_{\bar{k}}P)\bar{\eta}^{k}$ is $(0,1)$ - homogeneous and $S-%
\frac{1}{2}Q=P$.

Differentiating (\ref{12''}) with respect to $\eta ^{j}$ leads to
\begin{equation}
\stackrel{c}{\tilde{N}_{j}^{i}}=\stackrel{c}{N_{j}^{i}}+S_{j}\eta
^{i}+S\delta _{j}^{i}\mbox{   and   }\tilde{\theta}_{j}^{\ast i}=\theta
_{j}^{\ast i}+Q_{j}\eta ^{i}+Q\delta _{j}^{i},  \label{14}
\end{equation}
where $S_{j}:=\dot{\partial}_{j}S,$ $Q_{j}:=\dot{\partial}_{j}Q$, $\tilde{%
\theta}_{j}^{\ast i}:=\dot{\partial}_{j}\tilde{\theta}^{\ast i}$ and $\theta
_{j}^{\ast i}:=\dot{\partial}_{j}\theta ^{\ast i}$. Thus, $S_{j}-\frac{1}{2}%
Q_{j}=P_{j},$ with $P_{j}:=\dot{\partial}_{j}P.$

Now, to eliminate $S$ and $Q$ from (\ref{14}), we make the sum by $i=j.$
Since $S_i\eta ^i=S$ and $Q_i\eta ^i=0$, (\ref{14}) gives
\begin{equation}
S=\frac 1{n+1}(\stackrel{c}{\tilde{N}_i^i}-\stackrel{c}{N_i^i})%
\mbox{   and
}Q=\frac 1n(\tilde{\theta}_i^{*i}-\theta _i^{*i}).  \label{15}
\end{equation}

So that, $P=\frac 1{n+1}(\stackrel{c}{\tilde{N}_i^i}-\stackrel{c}{N_i^i}%
)-\frac 1{2n}(\tilde{\theta}_i^{*i}-\theta _i^{*i}).$ Substituting this in (%
\ref{12'}), we find that the projective change is
\begin{equation}
\tilde{G}^i=G^i+\frac 12(\tilde{\theta}^{*i}-\theta ^{*i})+\frac 1{n+1}(%
\stackrel{c}{\tilde{N}_l^l}-\stackrel{c}{N_l^l})\eta ^i-\frac 1{2n}(\tilde{%
\theta}_l^{*l}-\theta _l^{*l})\eta ^i,\;i=\overline{1,n}.  \label{15'}
\end{equation}
From here it results
\begin{equation}
D^i:=G^i-\frac 1{n+1}\stackrel{c}{N_l^l}\eta ^i-\frac 12(\theta ^{*i}-\frac
1n\theta _l^{*l}\eta ^i),  \label{16}
\end{equation}
which are the components of a projective invariant, under the projective
change (\ref{12'}).

\begin{proposition}
Let $(M,F)$ be a complex Finsler space. Then, $D^i$ are the local
coefficients of a complex spray if and only if $F$ is weakly K\"{a}hler.
\end{proposition}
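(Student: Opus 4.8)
The plan is to recognize that $D^i$ is, up to a projective term $-\tfrac{1}{n+1}\stackrel{c}{N_l^l}\eta^i - \tfrac12\theta^{*i} + \tfrac{1}{2n}\theta^{*l}_l\eta^i$, a modification of the genuine spray coefficients $G^i$, and then to decide when this modified object transforms like a spray, i.e., obeys rule (\ref{III.1.21}). First I would check how each of the four terms in (\ref{16}) transforms under a change of complex coordinates on $T'M$. The term $G^i$ already transforms by (\ref{III.1.21}), contributing the inhomogeneous term $-\tfrac12\frac{\partial^2 z'^i}{\partial z^j\partial z^k}\eta^j\eta^k$. The term $\tfrac{1}{n+1}\stackrel{c}{N_l^l}\eta^i$ I would analyze using the transformation rule (\ref{III.1.10}) for the $(c.n.c.)$ coefficients, contracted on $i=j$ (recall $\stackrel{c}{N_j^i}=\dot\partial_j G^i$ so its transformation is the differentiated version of (\ref{III.1.21})); this produces exactly a second, compensating inhomogeneous term. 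The remaining terms, built from $\theta^{*i}$ and its vertical trace $\theta^{*l}_l$, are tensorial-type objects: $\theta^{*i}$ is the vertical part of a globally defined form $\theta^*$ and transforms as a vector (no inhomogeneity), and its $(1,1)$-homogeneity $(\dot\partial_k\theta^{*i})\eta^k=\theta^{*i}$ guarantees that $\theta^{*i}-\tfrac1n\theta^{*l}_l\eta^i$ is again tensorial. So $D^i$ satisfies (\ref{III.1.21}) automatically, and the transformation rule is never the issue.

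The actual content is therefore the homogeneity condition. A complex spray must be built from $(2,0)$-homogeneous coefficients, equivalently $\dot\partial_{\bar h}(\text{coefficient})=0$: indeed the canonical $(c.n.c.)$ comes from the spray $G^i$ precisely because $G^i=G^i(z,\eta)$ (holomorphic homogeneity degree two), and Munteanu's result quoted in the excerpt says Chern–Finsler comes from a spray iff the metric is weakly Kähler. So I would compute $\dot\partial_{\bar h}D^i$. The terms $G^i$ and $\stackrel{c}{N_l^l}\eta^i=2G^i$ both have $\dot\partial_{\bar h}(\cdot)$ equal to whatever $\dot\partial_{\bar h}G^i$ is — call it the obstruction. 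Hence $\dot\partial_{\bar h}\big(G^i-\tfrac{1}{n+1}\stackrel{c}{N_l^l}\eta^i\big)=\tfrac{n-1}{n+1}\dot\partial_{\bar h}G^i$. For the $\theta^*$ part, I would use formula (\ref{122}) for $\dot\partial_k\theta^{*i}$ together with the $(1,1)$-homogeneity to express $\theta^{*i}$ and to compute $\dot\partial_{\bar h}\theta^{*i}$; crucially $\theta^{*i}$ encodes the failure of weak Kähler, since $\theta^{*i}=0$ iff the space is weakly Kähler, and $\theta^{*i}$ is built from $\stackrel{c}{\delta_{\bar j}}L$, i.e. it contains exactly the "non-holomorphic" piece $N_{\bar j}^{\bar r}-\stackrel{c}{N_{\bar j}^{\bar r}}$ that measures the same defect. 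The expected outcome is that $\dot\partial_{\bar h}D^i$ is a nonzero universal multiple of a quantity that vanishes iff $\theta^{*i}=0$, i.e. iff $F$ is weakly Kähler; conversely, when $F$ is weakly Kähler one has $\theta^{*i}=0$ and $\stackrel{c}{N_j^i}=N_j^i$ coincides with Chern–Finsler, $G^i$ is then honestly $(2,0)$-homogeneous, and $D^i=G^i-\tfrac{1}{n+1}N_l^l\eta^i$ is visibly a spray (difference of a spray and a term homogeneous of the right degree transforming tensorially).

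The main obstacle I anticipate is the bookkeeping in $\dot\partial_{\bar h}D^i$: one must carefully combine $\dot\partial_{\bar h}G^i$ (which is $\tfrac12\dot\partial_{\bar h}(\stackrel{c}{N_j^i}\eta^j)$ and not obviously tied to $\theta^{*i}$ at first glance) with the $\theta^*$-derivative via (\ref{122}), using Lemma 2.1 $((\dot\partial_{\bar k}G^i)\eta_i=0)$ and the homogeneity identities, to see that all the $C$-tensor and trace terms cancel and only the weak-Kähler defect $g^{\bar j i}(N_{\bar j}^{\bar r}-\stackrel{c}{N_{\bar j}^{\bar r}})g_{k\bar r}$-type expression survives. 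I would organize this by first writing $\dot\partial_{\bar h}\theta^{*i}$ explicitly (taking $\dot\partial_{\bar h}$ of (\ref{122}) is awkward, so instead I would differentiate $\theta^{*i}=2g^{\bar j i}\stackrel{c}{\delta_{\bar j}}L$ directly in $\bar\eta^h$), then assembling the coefficient of the surviving term and checking it is a nonzero constant times something equivalent to $\theta^{*i}$. Once both implications reduce to "$\theta^{*i}=0$", the equivalence with weak Kähler is immediate from the cited property of $\theta^{*i}$.
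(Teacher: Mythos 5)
There is a genuine gap, and it sits at the center of your plan: you identify ``$(2,0)$-homogeneous'' with ``$\dot{\partial}_{\bar{h}}(\cdot)=0$''. These are not the same. A function $f$ is $(2,0)$-homogeneous when $f(z,\lambda \eta )=\lambda ^{2}f(z,\eta )$, which by the Euler relations means $(\dot{\partial}_{h}f)\eta ^{h}=2f$ \emph{and} $(\dot{\partial}_{\bar{h}}f)\bar{\eta}^{h}=0$ --- the contracted condition, not the pointwise vanishing of $\dot{\partial}_{\bar{h}}f$. The pointwise condition $\dot{\partial}_{\bar{h}}G^{i}=0$ is precisely the \emph{generalized Berwald} condition of \S 2, which is far stronger than weakly K\"{a}hler. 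Indeed $G^{i}=\frac{1}{2}N_{j}^{i}\eta ^{j}$ is always $(2,0)$-homogeneous and always a complex spray even though $\dot{\partial}_{\bar{h}}G^{i}\neq 0$ in general; so your test ``$\dot{\partial}_{\bar{h}}D^{i}=0$'' fails in the ``if'' direction (for a weakly K\"{a}hler, non generalized Berwald metric one has $\theta ^{\ast i}=0$, $D^{i}=G^{i}-\frac{1}{n+1}\stackrel{c}{N_{l}^{l}}\eta ^{i}$ with $\dot{\partial}_{\bar{h}}D^{i}\neq 0$, yet $D^{i}$ must be a spray) and would smuggle an extra generalized Berwald-type hypothesis into the ``only if'' direction. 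A second concrete slip compounds this: $\stackrel{c}{N_{l}^{l}}\eta ^{i}$ is the \emph{trace} $\dot{\partial}_{l}G^{l}$ multiplied by $\eta ^{i}$, not the contraction $\stackrel{c}{N_{l}^{i}}\eta ^{l}=2G^{i}$, so the identity ``$\stackrel{c}{N_{l}^{l}}\eta ^{i}=2G^{i}$'' on which your $\frac{n-1}{n+1}\dot{\partial}_{\bar{h}}G^{i}$ cancellation rests is false.

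The argument the statement actually needs is much shorter and is pure homogeneity bookkeeping on (\ref{16}): $G^{i}$ and $\stackrel{c}{N_{l}^{l}}\eta ^{i}$ are $(2,0)$-homogeneous, while $\theta ^{\ast i}$ and $\theta _{l}^{\ast l}\eta ^{i}$ are $(1,1)$-homogeneous; a function that is simultaneously $(2,0)$- and $(1,1)$-homogeneous vanishes, so $D^{i}$ is $(2,0)$-homogeneous iff $\theta ^{\ast i}=\frac{1}{n}\theta _{l}^{\ast l}\eta ^{i}$. Contracting with $\eta _{i}$ and using $\theta ^{\ast i}\eta _{i}=2\bar{\eta}^{j}\stackrel{c}{\delta _{\bar{j}}}L=0$ forces $\theta _{l}^{\ast l}=0$, hence $\theta ^{\ast i}=0$, i.e.\ weak K\"{a}hler. (If you prefer your Euler-operator formulation, the correct quantity is $(\dot{\partial}_{\bar{h}}D^{i})\bar{\eta}^{h}=-\frac{1}{2}(\theta ^{\ast i}-\frac{1}{n}\theta _{l}^{\ast l}\eta ^{i})$, which gives the same condition; the uncontracted $\dot{\partial}_{\bar{h}}D^{i}$ does not.) None of the machinery of (\ref{122}) or Lemma 2.1 is required. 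Finally, on your first paragraph: the intended justification is that the correction terms transform \emph{tensorially}, so that $D^{i}$ inherits the inhomogeneous term of $G^{i}$ and hence satisfies (\ref{III.1.21}); if the trace term really ``compensated'' that inhomogeneity, $D^{i}$ would transform as a vector and would \emph{not} satisfy (\ref{III.1.21}).
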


\begin{proof}
First, $D^i$ satisfy the rule (\ref{III.1.21}), forasmuch $%
\stackrel{c}{N_l^l}\eta ^i$, $\theta ^{*i}$ and $\theta _l^{*l}\eta ^i$ have
changes all like vectors. Second, $D^i$ are $(2,0)$ - homogeneous if and
only if $\theta ^{*i}=\frac 1n\theta _l^{*l}\eta ^i$. The last relation
contracted by $\eta _i$ gives $0=\theta ^{*i}\eta _i=\frac 1n\theta _l^{*l}L.
$ Hence, $\theta _l^{*l}=0$ and so $\theta ^{*i}=0.$
\end{proof}

Further on, the projective change (\ref{12'}) gives rise to various
projective invariants. Indeed, some successive differentiations of (\ref{16}%
) with respect to $\eta $ and $\bar{\eta}$ give three \textit{projective
curvature invariants of Douglas type}
\begin{eqnarray}
D_{jkh}^i &=&G_{jkh}^i-\frac 1{n+1}[(\dot{\partial}_hD_{jk})\eta
^i+\sum_{(j,k,h)}D_{jh}\delta _k^i]  \label{17} \\
&&-\frac 12\{\theta _{jkh}^{*i}-\frac 1n[(\dot{\partial}_h\theta
_{ljk}^{*l})\eta ^i+\sum_{(j,k,h)}\theta _{ljh}^{*l}\delta _k^i]\};
\nonumber \\
D_{j\bar{k}\bar{h}}^i &=&G_{j\bar{k}\bar{h}}^i-\frac 1{n+1}[(\dot{\partial}%
_jD_{\bar{k}\bar{h}})\eta ^i+D_{\bar{k}\bar{h}}\delta _j^i]  \nonumber \\
&&-\frac 12\{\theta _{j\bar{k}\bar{h}}^{*i}-\frac 1n[(\dot{\partial}_{\bar{h}%
}\theta _{l\bar{k}j}^{*l})\eta ^i+\theta _{l\bar{k}\bar{h}}^{*l}\delta
_j^i]\};  \nonumber \\
D_{j\bar{k}h}^i &=&G_{j\bar{k}h}^i-\frac 1{n+1}[(\dot{\partial}_hD_{\bar{k}%
j})\eta ^i+D_{\bar{k}j}\delta _h^i+D_{\bar{k}h}\delta _j^i]  \nonumber \\
&&-\frac 12\{\theta _{j\bar{k}h}^{*i}-\frac 1n[(\dot{\partial}_h\theta _{l%
\bar{k}j}^{*l})\eta ^i+\theta _{l\bar{k}j}^{*l}\delta _h^i+\theta _{l\bar{k}%
h}^{*l}\delta _j^i]\},  \nonumber
\end{eqnarray}
where $D_{kh}:=G_{ikh}^i$, $D_{\bar{k}\bar{h}}:=G_{i\bar{k}\bar{h}}^i$ and $%
D_{\bar{k}h}:=G_{i\bar{k}h}^i$ are respectively, $hv$-, $\bar{h}\bar{v}$-
and $h\bar{v}$- \textit{Ricci tensors} and $\theta _{jkh}^{*i}:=\dot{\partial%
}_h\theta _{jk}^{*i},$ $\theta _{jk}^{*i}:=\dot{\partial}_k\theta _j^{*i},$ $%
\theta _{j\bar{k}h}^{*i}:=\dot{\partial}_{\bar{k}}\theta _{jh}^{*i},$ $%
\theta _{j\bar{k}\bar{h}}^{*i}:=\dot{\partial}_{\bar{k}}\theta _{j\bar{h}%
}^{*i}$ and $\theta _{j\bar{h}}^{*i}:=\dot{\partial}_{\bar{h}}\theta _j^{*i}=%
\dot{\partial}_j\theta _{\bar{h}}^{*i}.$ In (\ref{17}), $\displaystyle %
\sum_{(j,k,h)}$ is the cyclic sum.

\begin{definition}
A complex Finsler space $(M,F)$ is called complex Douglas space if the
invariants (\ref{17}) are vanishing.
\end{definition}

\begin{remark}
If $F$ is generalized Berwald, i.e. $G_{jk}^i(z)$, and weakly K\"{a}hler
then $G_{jkh}^i=G_{j\bar{k}\bar{h}}^i=G_{j\bar{k}h}^i=0$ and $D_{kh}=D_{\bar{%
k}\bar{h}}=D_{\bar{k}h}=\theta ^{*i}=0,$ and so the projective curvature
invariants of Douglas type are vanishing. Moreover, taking into account
Theorem 2.1 it results that any complex Berwald space is a complex Douglas
space.
\end{remark}

Subsequently, the key of the proofs is the strong maximum principle which
gives the constancy of the holomorphic and $0$ - homogenous functions.

\begin{lemma}
If one of $hv$-, $\bar{h}\bar{v}$- or $h\bar{v}$- Ricci tensors is vanishing
then they are all vanishing.
\end{lemma}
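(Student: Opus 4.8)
The plan is to exploit homogeneity relations connecting the three Ricci tensors $D_{kh}$, $D_{\bar k\bar h}$, $D_{\bar kh}$ to their contractions against $\eta$ and $\bar\eta$, together with the strong maximum principle invoked just before the statement. Recall from \S 3 that $G^i_{j\bar k h}\eta^j=G^i_{h\bar k}$ and $G^i_{j\bar k\bar h}\bar\eta^h=-G^i_{j\bar k}$; contracting the first on $i=j$ and relabelling gives $D_{\bar k h}=G^i_{i\bar k h}=G^i_{h\bar k}\big/\!$-type identities, while differentiating $2G^i=\stackrel{c}{N^i_j}\eta^j$ yields $G^i_{j\bar k}\eta^j=0$ hence $\dot\partial_{\bar k}G^i=\tfrac12 G^i_{j\bar k}\eta^j\cdot(\ldots)$ — more directly, $G^i_{i\bar k}=\dot\partial_{\bar k}\stackrel{c}{N^i_i}$ and using $\stackrel{c}{N^i_i}\eta^{\,}$–homogeneity one reads off that $D_{\bar kh}\eta^h$, $D_{\bar kh}\bar\eta^k$, $D_{\bar k\bar h}\bar\eta^h$, etc., are each proportional to one of the other Ricci tensors or to $D_{\bar k}:=G^i_{i\bar k}$.

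The concrete steps I would carry out: first, assume $D_{kh}=0$, i.e. $G^i_{ikh}=0$. Since $G^i_{jkh}=\dot\partial_h G^i_{jk}$ and (from the homogeneity list) $G^i_{jkh}\eta^j=G^i_{kh}$, contracting on $i=j$ we get the trace $K$-type relations forcing $\stackrel{c}{N^i_i}$ to be quadratic in $\eta$ up to a $0$-homogeneous remainder; combined with the structural formula $2G^i=\stackrel{c}{N^i_j}\eta^j$ this should pin down $\stackrel{c}{N^i_i}$ as exactly $(1,0)$-homogeneous-degree-$2$, whence $G^i_{i\bar kh}=\dot\partial_h\dot\partial_{\bar k}\stackrel{c}{N^i_i}=D_{\bar kh}=0$ and similarly $\dot\partial_{\bar h}\dot\partial_{\bar k}\stackrel{c}{N^i_i}=D_{\bar k\bar h}=0$. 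Second, assume $D_{\bar kh}=0$. Here I would use $G^i_{j\bar kh}\eta^j=G^i_{h\bar k}$, so $D_{\bar kh}=0$ gives (trace on $i=j$) that $D_{\bar k}:=G^i_{i\bar k}$ has a definite homogeneity; then $D_{\bar k\bar h}=\dot\partial_{\bar h}D_{\bar k}$ and $D_{kh}=\dot\partial_h\dot\partial_k(\tfrac12\stackrel{c}{N^i_i}\eta^{\,})$-type expressions are governed by the now-known behaviour of $\stackrel{c}{N^i_i}$. Third, the case $D_{\bar k\bar h}=0$ is handled by conjugation symmetry together with $G^i_{j\bar k\bar h}\bar\eta^h=-G^i_{j\bar k}$, which after tracing forces $D_{\bar k}=0$ and then reduces to the previous cases.

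The cleanest route is probably to show each of the three vanishing hypotheses separately implies $D_{\bar k}:=G^i_{i\bar k}=0$ (using in each case the relevant homogeneity contraction and, where a holomorphic-and-$0$-homogeneous quantity appears, the strong maximum principle to conclude it is constant, then a further contraction by $\eta$ or $\eta_i$ to kill the constant exactly as in the proof of Proposition 4.1), and conversely that $D_{\bar k}=0$ together with the homogeneity identities $G^i_{j\bar kh}\eta^j=G^i_{h\bar k}$, $G^i_{j\bar k\bar h}\bar\eta^h=-G^i_{j\bar k}$, $G^i_{jkh}\eta^j=G^i_{kh}$ forces all three Ricci tensors to vanish; then the three are equivalent through the single intermediary $D_{\bar k}$.

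I expect the main obstacle to be the bookkeeping of homogeneity degrees: $G^i$ is $(2,0)$-homogeneous, $\stackrel{c}{N^i_j}$ is $(1,0)$-homogeneous, the $\theta^{*i}$ are $(1,1)$-homogeneous, and the various $\dot\partial$-derivatives shift these degrees in both the $\eta$ and $\bar\eta$ slots, so one must track carefully which contraction ($\eta^h$ vs.\ $\bar\eta^k$) recovers which tensor and verify that no spurious $C$-torsion terms (as appeared in equation (\ref{122})) obstruct the argument — the saving grace being that here we work with $B\Gamma$, for which $G^i_{jk}$ is symmetric and free of such terms. Once the homogeneity relations are laid out correctly, each implication is a one-line contraction plus an appeal to the strong maximum principle, exactly in the style already used for Proposition 4.1.
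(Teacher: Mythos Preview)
Your overall strategy matches the paper's: each case is reduced by homogeneity contractions, and the passage from ``holomorphic in $\eta$ and $0$-homogeneous'' to ``depends on $z$ alone'' is supplied by the strong maximum principle. Your third paragraph, organizing everything around the single intermediary $D_{\bar k}:=G^i_{i\bar k}$, is in fact a cleaner packaging than the paper's case-by-case treatment; the paper proves the three implications separately but each one passes (implicitly) through $G^i_{i\bar k}=0$ or the equivalent statement that $G^i_{ik}$ depends only on $z$.

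Two concrete corrections. First, the claim ``differentiating $2G^i=\stackrel{c}{N^i_j}\eta^j$ yields $G^i_{j\bar k}\eta^j=0$'' is false: that differentiation gives $G^i_{j\bar k}\eta^j=2\dot\partial_{\bar k}G^i$, which need not vanish. This does not damage your argument, since you never actually use the false identity. Second, $\stackrel{c}{N^i_i}$ is $(1,0)$-homogeneous, not degree~$2$; the sentence about it being ``quadratic in $\eta$'' is off by one. What you really want in the case $D_{kh}=0$ is the paper's step: $\dot\partial_h G^i_{ik}=0$ means $G^i_{ik}$ is holomorphic in $\eta$; by \emph{conjugation} so is $G^{\bar\imath}_{\bar\imath\bar k}$, which is also $0$-homogeneous, hence constant in the fibre by the maximum principle; thus $G^i_{ik}=G^i_{ik}(z)$, whence $D_{\bar h k}=\dot\partial_{\bar h}G^i_{ik}=0$ and, contracting with $\eta^k$, $D_{\bar h}=G^i_{i\bar h}=0$. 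This conjugation step is the one piece your sketch leaves implicit; once you insert it, the $D_{\bar k}$-intermediary argument goes through exactly as you outline.
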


\begin{proof}
Supposing $D_{kh}=0,$ it results $G_{ikh}^i=0,$ which is
equivalent with $\dot{\partial}_hG_{ik}^i=0.$ By conjugation, $\dot{%
\partial}_{\bar{h}}G_{\bar{\imath}\bar{k}}^{\bar{\imath}}=0,$ and so, $G_{%
\bar{\imath}\bar{k}}^{\bar{\imath}}$ are holomorphic in $\eta .$ But, $G_{%
\bar{\imath}\bar{k}}^{\bar{\imath}}$ are 0 - homogeneous with respect to $%
\eta $ and so they depend only on $z,$ ($G_{\bar{\imath}\bar{k}}^{\bar{\imath%
}}=G_{\bar{\imath}\bar{k}}^{\bar{\imath}}(z)$). Hence, $G_{ik}^i$ depend
only on $z$ and $\dot{\partial}_{\bar{h}}G_{ik}^i=D_{\bar{h}k}=0$ which
contracted by $\eta ^k$ give $\dot{\partial}_{\bar{h}}\stackrel{c}{N_i^i}=0,$
i.e. $G_{i\bar{h}}^i=0.$ So that, $D_{\bar{k}\bar{h}}=\dot{\partial}_{\bar{h}%
}G_{i\bar{k}}^i=0.$

If $D_{\bar{k}\bar{h}}=0$ then $\dot{\partial}_{\bar{h}}G_{i\bar{k}}^i=0$
which contracted by $\bar{\eta}^h$ yield $G_{i\bar{k}}^i=0,$ because $G_{i%
\bar{k}\bar{h}}^i\bar{\eta}^h=-G_{i\bar{k}}^i.$ It results $\dot{\partial}_{%
\bar{k}}G_{ih}^i=0,$ i.e. $D_{\bar{k}h}=0.$ Further on using the
holomorphicity in $\eta $ and 0 - homogeneity of the coefficients $G_{ih}^i$
it results that $G_{ih}^i$ depend on $z$ alone$.$ So, $\dot{\partial}%
_jG_{ih}^i=0$ which give $D_{hj}=0.$

If $D_{\bar{k}h}=0$ then $\dot{\partial}_{\bar{k}}G_{ih}^i=0$ and similarly it
results that $G_{ih}^i$ depend only on $z$ and $G_{i\bar{k}}^i=0.$ This
implies $D_{hj}=D_{\bar{k}\bar{h}}=0.$
\end{proof}

Since $\theta ^{*i}$ are $(1,1)$ - homogeneous with respect to $\eta ,$ $%
\theta _k^{*i}\eta ^k=\theta ^{*i}$ and $\theta _{\bar{k}}^{*i}\bar{\eta}%
^k=\theta ^{*i}$ and so,
\begin{eqnarray}
\theta _{kj}^{*i}\eta ^k &=&0\;\;;\;\;\theta _{k\bar{h}}^{*i}\eta ^k=\theta
_{\bar{h}}^{*i}\;\;;\;\;\theta _{\bar{k}j}^{*i}\bar{\eta}^k=\theta
_j^{*i}\;\;;\;\;\theta _{\bar{k}\bar{h}}^{*i}\bar{\eta}^k=0;\;  \label{2'} \\
\theta _{kjr}^{*i}\eta ^k &=&-\theta _{jr}^{*i}\;\;;\;\;\theta _{k\bar{h}%
j}^{*i}\eta ^k=0\;\;;\;\;\theta _{r\bar{k}j}^{*i}\bar{\eta}^k=\theta
_{rj}^{*i}\;\;;\;\;\theta _{j\bar{k}\bar{h}}^{*i}\bar{\eta}^k=0;\;  \nonumber
\\
\theta _{k\bar{h}\bar{r}}^{*i}\eta ^k &=&\theta _{\bar{h}\bar{r}%
}^{*i}\;\;;\;\;(\dot{\partial}_h\theta _{kjr}^{*i})\eta ^k=-2\theta
_{hjr}^{*i}\;\;;\;\;(\dot{\partial}_r\theta _{k\bar{h}j}^{*i})\eta
^k=-\theta _{r\bar{h}j}^{*i}\;;\;  \nonumber \\
(\dot{\partial}_{\bar{h}}\theta _{r\bar{k}j}^{*i})\bar{\eta}^k &=&0\;\;;\;\;(%
\dot{\partial}_h\theta _{r\bar{k}j}^{*i})\bar{\eta}^k=\theta
_{rjh}^{*i}\;\;;\;\;(\dot{\partial}_{\bar{r}}\theta _{k\bar{h}j}^{*i})\eta
^k=0.  \nonumber
\end{eqnarray}

\begin{proposition}
Let $(M,F)$ be a complex Finsler space. If $D_{j\bar{k}h}^i=0$ then $F$ is
generalized Berwald and
\begin{eqnarray}
D_{jkh}^i &=&-\frac 12\{\theta _{jkh}^{*i}-\frac 1n[(\dot{\partial}_h\theta
_{ljk}^{*l})\eta ^i+\sum_{(j,k,h)}\theta _{ljh}^{*l}\delta _k^i]\};
\label{1'} \\
D_{j\bar{k}\bar{h}}^i &=&-\frac 12\{\theta _{j\bar{k}\bar{h}}^{*i}-\frac 1n[(%
\dot{\partial}_{\bar{h}}\theta _{l\bar{k}j}^{*l})\eta ^i+\theta _{l\bar{k}%
\bar{h}}^{*l}\delta _j^i]\};  \nonumber \\
\theta _{j\bar{k}h}^{*i} &=&\frac 1n[(\dot{\partial}_h\theta _{lj\bar{k}%
}^{*l})\eta ^i+\theta _{l\bar{k}j}^{*l}\delta _h^i+\theta _{l\bar{k}%
h}^{*l}\delta _j^i].  \nonumber
\end{eqnarray}
\end{proposition}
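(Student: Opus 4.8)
The plan is to start from the definition of the projective invariant $D^{i}$ in (\ref{16}) and the derived invariants (\ref{17}), and to exploit the hypothesis $D_{j\bar{k}h}^{i}=0$ in two stages: first to extract the claim that $F$ is generalized Berwald (equivalently, $G_{jk}^{i}$ depends on $z$ only, i.e. $\dot{\partial}_{\bar{h}}G_{jk}^{i}=0$), and then to plug this back into the first two invariants of (\ref{17}) to obtain the stated simplified forms.

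For the first stage, I would contract the equation $D_{j\bar{k}h}^{i}=0$, written out from the third line of (\ref{17}), with $\eta^{j}$. Using the homogeneity relations $G_{j\bar{k}h}^{i}\eta^{j}=G_{h\bar{k}}^{i}$ from \S3 and the homogeneity identities for the $\theta^{*}$-derivatives collected in (\ref{2'}) (in particular $\theta^{*i}_{r\bar{k}j}\bar{\eta}^{k}=\theta^{*i}_{rj}$ and the traces $\theta^{*l}_{l\bar{k}j}$, $\dot{\partial}_{h}\theta^{*l}_{lj\bar{k}}$), this contraction should collapse the right-hand side to an expression in $G_{h\bar{k}}^{i}$, its trace $D_{\bar{k}h}=G_{i\bar{k}h}^{i}$, and $\theta^{*}$-terms, yielding an identity that relates $G_{h\bar{k}}^{i}$ to purely $\theta^{*}$-data. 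The point is that $G_{h\bar{k}}^{i}=\dot{\partial}_{\bar{k}}\stackrel{c}{N_{h}^{i}}=\dot{\partial}_{\bar{k}}\dot{\partial}_{h}G^{i}$ measures exactly the failure of generalized Berwald, and I expect the $\theta^{*}$-contributions to cancel or to be absorbable because $\theta^{*i}$ is $(1,1)$-homogeneous; a further contraction (e.g. with $\bar\eta^{k}$, or taking the $i=h$ trace as was done to derive (\ref{15})) together with the strong maximum principle quoted just before Lemma 4.1 — holomorphic $0$-homogeneous functions are constant, hence depend on $z$ alone — should force $G_{h\bar{k}}^{i}=0$, i.e. $\dot{\partial}_{\bar{k}}G^{i}$ is holomorphic and $0$-homogeneous in $\eta$, whence $\dot{\partial}_{\bar h}G_{jk}^{i}=0$. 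This is the generalized Berwald conclusion.

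For the second stage, once $F$ is generalized Berwald we have $G_{jkh}^{i}=\dot{\partial}_{h}G_{jk}^{i}=0$ and $G_{j\bar{k}\bar{h}}^{i}=G_{j\bar{k}h}^{i}=0$, hence also the Ricci tensors $D_{kh}=D_{\bar k\bar h}=D_{\bar k h}=0$ (this is exactly Remark 4.2, modulo the weakly Kähler assumption there, which is not needed for the vanishing of the $G$-type curvatures). Substituting these zeros into the first two lines of (\ref{17}) immediately removes the $G_{jkh}^{i}$, $G_{j\bar{k}\bar{h}}^{i}$ terms and all the $\dot\partial D$ and $D\,\delta$ terms, leaving precisely the asserted formulas for $D_{jkh}^{i}$ and $D_{j\bar{k}\bar{h}}^{i}$. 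For the third asserted identity, $\theta^{*i}_{j\bar{k}h}=\frac1n[(\dot\partial_{h}\theta^{*l}_{lj\bar{k}})\eta^{i}+\theta^{*l}_{l\bar{k}j}\delta_{h}^{i}+\theta^{*l}_{l\bar{k}h}\delta_{j}^{i}]$, I would return to the third line of (\ref{17}): with $D_{j\bar{k}h}^{i}=0$, $G_{j\bar{k}h}^{i}=0$ and $D_{\bar k j}=D_{\bar k h}=0$, the equation reduces to exactly $0=-\tfrac12\{\theta^{*i}_{j\bar{k}h}-\tfrac1n[\dots]\}$, which is the claim. (One should double-check that $\theta^{*l}_{lj\bar k}=\theta^{*l}_{l\bar k j}$ by the symmetry $\theta^{*i}_{j\bar h}=\dot\partial_{\bar h}\theta^{*i}_{j}=\dot\partial_{j}\theta^{*i}_{\bar h}$ noted after (\ref{17}), so the indices match the statement.)

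The main obstacle I anticipate is the first stage: the $\eta^{j}$-contraction of $D_{j\bar{k}h}^{i}=0$ produces several $\theta^{*}$-trace terms, and one must verify carefully, using the full list (\ref{2'}) of homogeneity identities for the first and second $\dot\partial$-derivatives of $\theta^{*i}$, that these combine into something that does not obstruct the conclusion $G_{h\bar k}^{i}=0$ — i.e. that after the relevant contractions the $\theta^{*}$-part is automatically consistent and it is genuinely the $G_{h\bar k}^{i}$-part that is pinned down to a holomorphic $0$-homogeneous object. The bookkeeping of which contraction ($\eta^{j}$, then $\bar\eta^{k}$, versus the $i=h$ trace) isolates $G_{h\bar k}^{i}$ cleanly is the delicate point; everything after "generalized Berwald" is a direct substitution into (\ref{17}).
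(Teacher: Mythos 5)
Your two-stage architecture (first extract the generalized Berwald property from $D_{j\bar{k}h}^{i}=0$, then substitute back into (\ref{17})) is exactly the paper's, and your second stage is correct as written. The problem is the first stage, where there is a genuine gap: the one ingredient that actually makes the argument close is Lemma 2.1, $(\dot{\partial}_{\bar{k}}G^{i})\eta_{i}=0$, and your proposal never invokes it, while the substitutes you offer do not work. The paper does not contract with $\eta^{j}$ alone; it contracts the identity coming from $D_{j\bar{k}h}^{i}=0$ with $\eta^{j}\eta^{h}$, which by the homogeneity list (\ref{2'}) annihilates \emph{every} $\theta^{*}$-term and collapses the Ricci terms to the single trace $G_{l\bar{k}}^{l}$, leaving $\dot{\partial}_{\bar{k}}G^{i}=\frac{2}{n+1}G_{l\bar{k}}^{l}\eta^{i}$. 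Contracting this with $\eta_{i}$ and using Lemma 2.1 together with $\eta^{i}\eta_{i}=L>0$ forces $G_{l\bar{k}}^{l}=0$, hence $D_{\bar{k}h}=0$. Your proposed alternatives cannot replace this: contracting with $\bar{\eta}^{k}$ yields $0=0$ because $G_{h\bar{k}}^{i}\bar{\eta}^{k}=0$ already holds by homogeneity; the $i=h$ trace returns an equation containing $G_{l\bar{k}}^{l}$ on both sides and does not close by itself; and the strong maximum principle only applies once holomorphicity in $\eta$ is established, i.e.\ \emph{after} $G_{h\bar{k}}^{i}=0$ is known, so it cannot be the tool that produces that vanishing.

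A second structural point you are missing is that the argument is necessarily two-pass. A single $\eta^{j}$-contraction of $D_{j\bar{k}h}^{i}=0$ leaves alive the terms $G_{l\bar{k}}^{l}\delta_{h}^{i}$, $D_{\bar{k}h}\eta^{i}$ and $(\dot{\partial}_{h}D_{\bar{k}j})\eta^{j}\eta^{i}$, so it pins down nothing until the trace has been killed. Only after $D_{\bar{k}h}=0$ is substituted back into the full, uncontracted identity does the remaining equation read $G_{j\bar{k}h}^{i}=\frac{1}{2}\{\theta^{*i}_{j\bar{k}h}-\frac{1}{n}[\cdots]\}$, and a contraction by $\eta^{j}$ (the $\theta^{*}$-block again dying by (\ref{2'})) then gives $G_{h\bar{k}}^{i}=0$; holomorphy plus $0$-homogeneity finally yields $G_{jh}^{i}=G_{jh}^{i}(z)$, and with it $G_{jkh}^{i}=G_{j\bar{k}\bar{h}}^{i}=0$ and the displayed formulas. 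So the delicate point you flagged is real, but the resolution is not "careful bookkeeping of the $\theta^{*}$-traces" under a single contraction: it is the specific sequence $\eta^{j}\eta^{h}$, then $\eta_{i}$ via Lemma 2.1, then back-substitution, then $\eta^{j}$.
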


\begin{proof}
If $D_{j\bar{k}h}^i=0$ then
\begin{eqnarray}
G_{j\bar{k}h}^i &=&\frac 1{n+1}[(\dot{\partial}_hD_{\bar{k}j})\eta ^i+D_{%
\bar{k}j}\delta _h^i+D_{\bar{k}h}\delta _j^i]  \label{1} \\
&&+\frac 12\{\theta _{j\bar{k}h}^{*i}-\frac 1n[(\dot{\partial}_h\theta _{l%
\bar{k}j}^{*l})\eta ^i+\theta _{l\bar{k}j}^{*l}\delta _h^i+\theta _{l\bar{k}%
h}^{*l}\delta _j^i]\}  \nonumber
\end{eqnarray}
which will be contracted by $\eta ^j\eta ^h$ and then by $\eta _i$.

Using $G_{j\bar{k}h}^i\eta ^j\eta ^h=G_{h\bar{k}}^i$ $\eta ^h=\dot{\partial}%
_{\bar{k}}G^i$ ; $(\dot{\partial}_jD_{\bar{k}h})\eta ^j\eta ^h=0$ ; $D_{\bar{%
k}h}\eta ^h=G_{l\bar{k}}^l$ and taking into account (\ref{2'}), after the
contraction by $\eta ^j\eta ^h$ of $G_{j\bar{k}h}^i$, we obtain
\[
\dot{\partial}_{\bar{k}}G^i=\frac 2{n+1}G_{l\bar{k}}^l\eta ^i.
\]
Due to Lemma 2.1, i.e. $(\dot{\partial}_{\bar{k}}G^i)\eta _i=0,$ the
contraction of the above relation with $\eta _i$ leads to $G_{l\bar{k}}^l=0.$
Its differential with respect to $\eta ^h$ gives $G_{l\bar{k}h}^l=0$, i.e. $%
D_{\bar{k}h}=0$ which plugged into (\ref{1}) yields
\[
G_{j\bar{k}h}^i=\frac 12\{\theta _{j\bar{k}h}^{*i}-\frac 1n[(\dot{\partial}%
_h\theta _{l\bar{k}j}^{*l})\eta ^i+\theta _{l\bar{k}j}^{*l}\delta
_h^i+\theta _{l\bar{k}h}^{*l}\delta _j^i]\}.
\]
The last relation contracted by $\eta ^j$ gives $G_{h\bar{k}}^i=0.$
Next, it results $\dot{\partial}_{\bar{k}}G_{jh}^i=0$ which means that $%
G_{jh}^i$ are holomorphic functions with respect to $\eta $. Together with
their $0$ - homogeneity imply $G_{jh}^i=G_{jh}^i(z).$ Hence $G_{jkh}^i=G_{j%
\bar{k}\bar{h}}^i=0$ and (\ref{1'}).
\end{proof}

\begin{proposition}
Let $(M,F)$ be a complex Finsler space. If $D_{j\bar{k}\bar{h}}^i=0$ then $F$
is generalized Berwald and
\begin{eqnarray}
D_{jkh}^i &=&-\frac 12\{\theta _{jkh}^{*i}-\frac 1n[(\dot{\partial}_h\theta
_{ljk}^{*l})\eta ^i+\sum_{(j,k,h)}\theta _{ljh}^{*l}\delta _k^i]\};
\label{30} \\
\theta _{j\bar{k}\bar{h}}^{*i} &=&\frac 1n[(\dot{\partial}_{\bar{h}}\theta
_{lj\bar{k}}^{*l})\eta ^i+\theta _{l\bar{k}\bar{h}}^{*l}\delta _j^i];
\nonumber \\
D_{j\bar{k}h}^i &=&-\frac 12\{\theta _{j\bar{k}h}^{*i}-\frac 1n[(\dot{%
\partial}_h\theta _{lj\bar{k}}^{*l})\eta ^i+\theta _{lj\bar{k}}^{*l}\delta
_h^i+\theta _{l\bar{k}h}^{*l}\delta _j^i]\}.  \nonumber
\end{eqnarray}
\end{proposition}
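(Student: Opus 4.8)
The plan is to follow the scheme of the previous proposition, with the contraction by $\eta ^{j}\eta ^{h}$ replaced by a contraction by $\bar{\eta}^{h}$. First I would rewrite the hypothesis $D_{j\bar{k}\bar{h}}^{i}=0$, by means of the second line of (\ref{17}), as
\[
G_{j\bar{k}\bar{h}}^{i}=\frac{1}{n+1}[(\dot{\partial}_{j}D_{\bar{k}\bar{h}})\eta ^{i}+D_{\bar{k}\bar{h}}\delta _{j}^{i}]+\frac{1}{2}\{\theta _{j\bar{k}\bar{h}}^{*i}-\frac{1}{n}[(\dot{\partial}_{\bar{h}}\theta _{l\bar{k}j}^{*l})\eta ^{i}+\theta _{l\bar{k}\bar{h}}^{*l}\delta _{j}^{i}]\},
\]
and then contract it by $\bar{\eta}^{h}$. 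On the left $G_{j\bar{k}\bar{h}}^{i}\bar{\eta}^{h}=-G_{j\bar{k}}^{i}$; on the right, $D_{\bar{k}\bar{h}}\bar{\eta}^{h}=G_{l\bar{k}\bar{h}}^{l}\bar{\eta}^{h}=-G_{l\bar{k}}^{l}$ and $(\dot{\partial}_{j}D_{\bar{k}\bar{h}})\bar{\eta}^{h}=\dot{\partial}_{j}(D_{\bar{k}\bar{h}}\bar{\eta}^{h})=-D_{\bar{k}j}$, whereas the three $\theta ^{*}$-terms drop out, since $\theta _{j\bar{k}\bar{h}}^{*i}\bar{\eta}^{h}=0$, $\theta _{l\bar{k}\bar{h}}^{*l}\bar{\eta}^{h}=0$ and $(\dot{\partial}_{\bar{h}}\theta _{l\bar{k}j}^{*l})\bar{\eta}^{h}=0$; these three are not literally among the relations (\ref{2'}), but they follow in the same way by differentiating $(\dot{\partial}_{\bar{k}}\theta ^{*i})\bar{\eta}^{k}=\theta ^{*i}$ and permuting the conjugate vertical derivatives. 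One is then left with $G_{j\bar{k}}^{i}=\frac{1}{n+1}(D_{\bar{k}j}\eta ^{i}+G_{l\bar{k}}^{l}\delta _{j}^{i})$.

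Next I would contract this relation by $\eta ^{j}$: since $G_{j\bar{k}}^{i}\eta ^{j}=\dot{\partial}_{\bar{k}}G^{i}$ and $D_{\bar{k}j}\eta ^{j}=G_{l\bar{k}}^{l}$ (the trace $G_{l\bar{k}}^{l}$ being $(1,0)$-homogeneous in $\eta $), this yields $\dot{\partial}_{\bar{k}}G^{i}=\frac{2}{n+1}G_{l\bar{k}}^{l}\eta ^{i}$, and a further contraction by $\eta _{i}$, together with Lemma 2.1 and $\eta ^{i}\eta _{i}=L\neq 0$, forces $G_{l\bar{k}}^{l}=0$. Hence $D_{\bar{k}j}=\dot{\partial}_{j}G_{l\bar{k}}^{l}=0$ and $G_{j\bar{k}}^{i}=0$, so $\dot{\partial}_{\bar{k}}G_{jh}^{i}=\dot{\partial}_{h}G_{j\bar{k}}^{i}=0$; the functions $G_{jh}^{i}$, being holomorphic in $\eta $ and $0$-homogeneous, depend only on $z$, so that $F$ is generalized Berwald.

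Finally, from $G_{jh}^{i}=G_{jh}^{i}(z)$ and $G_{j\bar{k}}^{i}=0$ one gets $G_{jkh}^{i}=G_{j\bar{k}\bar{h}}^{i}=G_{j\bar{k}h}^{i}=0$, whence the Ricci tensors $D_{kh}$, $D_{\bar{k}\bar{h}}$, $D_{\bar{k}h}$ all vanish. Inserting these vanishings into the three lines of (\ref{17}) gives the first and third identities of (\ref{30}), while the middle one is the hypothesis $D_{j\bar{k}\bar{h}}^{i}=0$ itself, rewritten after cancelling $G_{j\bar{k}\bar{h}}^{i}$ and $D_{\bar{k}\bar{h}}$ and using $\theta _{l\bar{k}j}^{*l}=\theta _{lj\bar{k}}^{*l}$ (commutation of $\dot{\partial}_{\bar{k}}$ and $\dot{\partial}_{j}$) to arrange the indices as in (\ref{30}). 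The only delicate point I anticipate is the vanishing of the three $\theta ^{*}$-terms after the $\bar{\eta}^{h}$-contraction; the rest is a direct transcription of the preceding proof.
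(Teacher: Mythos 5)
Your proposal is correct and follows essentially the same route as the paper's proof: both kill the $\theta^{*}$-terms by the homogeneity relations (\ref{2'}), use Lemma 2.1 to force $G_{l\bar{k}}^{l}=0$ and then $G_{j\bar{k}}^{i}=0$, invoke holomorphy in $\eta$ plus $0$-homogeneity to get $G_{jh}^{i}=G_{jh}^{i}(z)$, and substitute back into (\ref{17}). The only difference is the order of contractions (you contract by $\bar{\eta}^{h}$ first and then by $\eta^{j}\eta_{i}$, whereas the paper contracts by $\eta^{j}\bar{\eta}^{h}\eta_{i}$ at once), which is immaterial.
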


\begin{proof}
If $D_{j\bar{k}\bar{h}}^i=0$ then
\begin{equation}
G_{j\bar{k}\bar{h}}^i=\frac 1{n+1}[(\dot{\partial}_jD_{\bar{k}\bar{h}})\eta
^i+D_{\bar{k}\bar{h}}\delta _j^i]+\frac 12\{\theta _{j\bar{k}\bar{h}%
}^{*i}-\frac 1n[(\dot{\partial}_{\bar{h}}\theta _{lj\bar{k}}^{*l})\eta
^i+\theta _{l\bar{k}\bar{h}}^{*l}\delta _j^i]\}.  \label{2''}
\end{equation}
The contraction of (\ref{2''}) by $\eta ^j\bar{\eta}^h\eta _i$ and using (%
\ref{2'}) and $G_{j\bar{k}\bar{h}}^i\bar{\eta}^h\eta ^j=-G_{j\bar{k}}^i\eta
^j=-\dot{\partial}_{\bar{k}}G^i$ ; $D_{\bar{k}\bar{h}}\bar{\eta}^h=-G_{i\bar{%
k}}^i$ ; $(\dot{\partial}_jD_{\bar{k}\bar{h}})\bar{\eta}^h\eta ^j=-(\dot{%
\partial}_jG_{i\bar{k}}^i)\eta ^j=-G_{i\bar{k}}^i,$ it results
\[
0=-(\dot{\partial}_{\bar{k}}G^i)\eta _i=-\frac{2L}{n+1}G_{l\bar{k}}^l,
\]
which implies $G_{l\bar{k}}^l=0$ and so, $G_{l\bar{k}\bar{h}}^l=0$, i.e. $D_{%
\bar{k}\bar{h}}=0.$ Plugging $D_{\bar{k}\bar{h}}=0$ into (\ref{2''}) we
obtain
\[
G_{j\bar{k}\bar{h}}^i=\frac 12\{\theta _{j\bar{k}\bar{h}}^{*i}-\frac 1n[(%
\dot{\partial}_{\bar{h}}\theta _{lj\bar{k}}^{*l})\eta ^i+\theta _{l\bar{k}%
\bar{h}}^{*l}\delta _j^i]\}.
\]
Now, the last relations contracted only by $\bar{\eta}^h$ leads to $G_{j\bar{%
k}}^i=0.$ As above we obtain that $G_{jh}^i$ depend only on $z.$ So, the
space is generalized Berwald and the relations (\ref{30}) are true.
\end{proof}

\begin{theorem}
Let $(M,F)$ be a complex Finsler space. Then, $(M,F)$ is Douglas if and only
if it is generalized Berwald with
\begin{eqnarray}
\theta _{jkh}^{*i}=\frac 1n[(\dot{\partial}_h\theta _{ljk}^{*l})\eta
^i+\sum_{(j,k,h)}\theta _{ljh}^{*l}\delta _k^i];  \label{4} \\
\theta _{j\bar{k}\bar{h}}^{*i}=\frac 1n[(\dot{\partial}_{\bar{h}}\theta _{lj%
\bar{k}}^{*l})\eta ^i+\theta _{l\bar{k}\bar{h}}^{*l}\delta _j^i];  \nonumber
\\
\theta _{j\bar{k}h}^{*i}=\frac 1n[(\dot{\partial}_h\theta _{lj\bar{k}%
}^{*l})\eta ^i+\theta _{lj\bar{k}}^{*l}\delta _h^i+\theta _{l\bar{k}%
h}^{*l}\delta _j^i]\}.  \nonumber
\end{eqnarray}
\end{theorem}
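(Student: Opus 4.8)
The plan is to prove the equivalence in two directions, exploiting the three preparatory propositions already at hand. For the necessity direction, assume $(M,F)$ is Douglas, so that $D_{jkh}^i=D_{j\bar{k}\bar{h}}^i=D_{j\bar{k}h}^i=0$ by Definition 4.1. In particular $D_{j\bar{k}h}^i=0$, so Proposition 4.2 applies directly: it tells us that $F$ is generalized Berwald and that $G_{jkh}^i=G_{j\bar{k}\bar{h}}^i=0$, while $D_{jkh}^i$ and $D_{j\bar{k}\bar{h}}^i$ reduce to the expressions $-\frac12\{\theta_{jkh}^{*i}-\frac1n[\cdots]\}$ and $-\frac12\{\theta_{j\bar{k}\bar{h}}^{*i}-\frac1n[\cdots]\}$ displayed in (\ref{1'}). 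Setting these equal to zero immediately yields the first two identities of (\ref{4}). The third identity of (\ref{4}) is precisely the third relation in (\ref{1'}) (which held unconditionally under $D_{j\bar{k}h}^i=0$, without even needing the Douglas hypothesis on the other two invariants). So the forward direction is essentially a bookkeeping consequence of Proposition 4.2.

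For the sufficiency direction, assume $F$ is generalized Berwald and that the three identities (\ref{4}) hold. Generalized Berwald means $G_{jk}^i=G_{jk}^i(z)$, hence $G_{jkh}^i=\dot\partial_h G_{jk}^i=0$ and likewise $G_{j\bar{k}\bar{h}}^i=G_{j\bar{k}h}^i=0$ and $G_{i\bar{k}h}^l=0$; consequently all the Ricci tensors $D_{kh}$, $D_{\bar{k}\bar{h}}$, $D_{\bar{k}h}$ vanish (either directly from these formulas or via Lemma 4.1). Feeding $G_{jkh}^i=0$ and $D_{kh}=D_{\bar kh}=0$ etc. into the defining formulas (\ref{17}) for the three Douglas invariants collapses them to $D_{jkh}^i=-\frac12\{\theta_{jkh}^{*i}-\frac1n[(\dot\partial_h\theta_{ljk}^{*l})\eta^i+\sum_{(j,k,h)}\theta_{ljh}^{*l}\delta_k^i]\}$ and the analogous reductions for $D_{j\bar{k}\bar{h}}^i$ and $D_{j\bar{k}h}^i$. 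The three hypotheses (\ref{4}) are exactly the statements that the bracketed quantities vanish, so $D_{jkh}^i=D_{j\bar{k}\bar{h}}^i=D_{j\bar{k}h}^i=0$, i.e. the space is Douglas.

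The only delicate point to watch is the careful matching of the $\theta^{*}$-index patterns between (\ref{17}), (\ref{1'})--(\ref{30}), and (\ref{4}) — in particular keeping straight the difference between $\theta_{lj\bar{k}}^{*l}$ and $\theta_{l\bar{k}j}^{*l}$, which by the symmetry $\theta_{j\bar h}^{*i}=\dot\partial_{\bar h}\theta_j^{*i}=\dot\partial_j\theta_{\bar h}^{*i}$ and the homogeneity relations (\ref{2'}) are consistent but must be tracked. I expect no genuine obstacle here: once Proposition 4.2 is invoked for necessity and the generalized-Berwald vanishing of $G$-tensors is invoked for sufficiency, the rest is substitution. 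I would phrase the write-up as: ``If $(M,F)$ is Douglas then $D_{j\bar{k}h}^i=0$ and Proposition 4.2 gives (\ref{4}); conversely, if $F$ is generalized Berwald then (\ref{17}) reduce to the right-hand sides of (\ref{1'}) and (\ref{30}), which vanish precisely under (\ref{4}).''
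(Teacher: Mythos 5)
Your proof is correct and follows essentially the same route as the paper: the forward direction is read off from Proposition 4.2 (the paper cites Propositions 4.2 and 4.3 jointly, but as you note, 4.2 alone already suffices once all three Douglas invariants vanish), and the converse is the same substitution of (\ref{4}) into (\ref{17}) after the generalized Berwald hypothesis kills the $G$-curvature and Ricci terms. Your version merely spells out the bookkeeping that the paper compresses into two sentences.
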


\begin{proof}
The direct implication is obvious by the last two
Propositions. Conversely, if the space is generalized Berwald, replacing the
relations (\ref{4}) into (\ref{17}), it follows $D_{j{\bar{k%
}}h}^i=D_{jkh}^i=D_{j{\bar{k}}{%
\bar{h}}}^i=0$.
\end{proof}

\section{Weakly K\"{a}hler projective changes}

\setlength\arraycolsep{3pt}\setcounter{equation}{0} All the next discussion
will be focused on the weakly K\"{a}hler complex Finsler spaces. In this
case, the projective invariants of Douglas type (\ref{17}) are
\begin{eqnarray}
D_{jkh}^i &=&G_{jkh}^i-\frac 1{n+1}[(\dot{\partial}_jD_{kh})\eta
^i+\sum_{(j,k,h)}D_{jk}\delta _h^i];  \label{9} \\
D_{j\bar{k}\bar{h}}^i &=&G_{j\bar{k}\bar{h}}^i-\frac 1{n+1}[(\dot{\partial}%
_jD_{\bar{k}\bar{h}})\eta ^i+D_{\bar{k}\bar{h}}\delta _j^i];  \nonumber \\
D_{j\bar{k}h}^i &=&G_{j\bar{k}h}^i-\frac 1{n+1}[(\dot{\partial}_jD_{\bar{k}%
h})\eta ^i+D_{\bar{k}h}\delta _j^i+D_{\bar{k}j}\delta _h^i].  \nonumber
\end{eqnarray}

By Lemma 4.1, it immediately results the following Proposition.

\begin{proposition}
Let $(M,F)$ be a weakly K\"{a}hler complex Finsler space. If one of $hv$-, $%
\bar{h}\bar{v}$- or $h\bar{v}$- Ricci tensors is vanishing, then
\begin{equation}
D_{jkh}^i=G_{jkh}^i\;;\;D_{j\bar{k}\bar{h}}^i=G_{j\bar{k}\bar{h}}^i\;;\;D_{j%
\bar{k}h}^i=G_{j\bar{k}h}^i.  \label{10}
\end{equation}
\end{proposition}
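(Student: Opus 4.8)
The plan is to read the statement straight off the weakly K\"{a}hler form (\ref{9}) of the Douglas-type invariants, using Lemma 4.1 as the only nontrivial input. First I would invoke Lemma 4.1: by hypothesis one of the Ricci tensors $D_{kh}$, $D_{\bar{k}\bar{h}}$, $D_{\bar{k}h}$ vanishes, and that lemma then forces all three of them to vanish identically on $\widetilde{T^{\prime }M}$, i.e. $D_{kh}=D_{\bar{k}\bar{h}}=D_{\bar{k}h}=0$.

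Next, since these Ricci tensors vanish as functions on $\widetilde{T^{\prime }M}$, their vertical derivatives vanish as well: $\dot{\partial}_jD_{kh}=\dot{\partial}_jD_{\bar{k}\bar{h}}=\dot{\partial}_jD_{\bar{k}h}=0$. Consequently every correction term on the right-hand side of each of the three equalities in (\ref{9}) --- both the $\eta^i$-weighted terms and the Kronecker-delta (resp. cyclic-sum) terms --- is zero. Substituting this into (\ref{9}) yields $D_{jkh}^i=G_{jkh}^i$, $D_{j\bar{k}\bar{h}}^i=G_{j\bar{k}\bar{h}}^i$ and $D_{j\bar{k}h}^i=G_{j\bar{k}h}^i$, which is exactly (\ref{10}).

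There is essentially no obstacle to overcome: all the real work has already been done, partly in the weakly K\"{a}hler simplification (\ref{9}) of the general formulas (\ref{17}), and partly in Lemma 4.1, whose proof uses the strong maximum principle (holomorphicity plus $0$-homogeneity in $\eta$) to propagate the vanishing of one Ricci tensor to the vanishing of all three. The only point deserving an explicit line is the remark that $D_{kh}\equiv 0$ entails $\dot{\partial}_jD_{kh}\equiv 0$, so that no residual term survives in (\ref{9}); hence the statement follows at once.
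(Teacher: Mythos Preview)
Your argument is correct and coincides with the paper's own one-line proof, which simply says the proposition follows immediately from Lemma 4.1 applied to the weakly K\"{a}hler formulas (\ref{9}). You have merely spelled out the implicit step that vanishing of the Ricci tensors kills both the $\eta^i$-weighted and the $\delta$-terms in (\ref{9}).
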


\begin{proposition}
Let $(M,F)$ be a weakly K\"{a}hler complex Finsler space. If one of
statements (\ref{10}) is true, then the $hv$-, $\bar{h}\bar{v}$- and $h\bar{v%
}$- Ricci tensors are vanishing.
\end{proposition}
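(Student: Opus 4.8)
The plan is to treat the three identities in (\ref{10}) one at a time; in each case a single contraction will force the corresponding Ricci tensor to vanish, and then Lemma 4.1 propagates the vanishing to the other two. The starting remark is that, comparing with (\ref{9}), each identity in (\ref{10}) amounts exactly to saying that the bracketed correction term of the corresponding Douglas invariant is identically zero.

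First, assume $D_{jkh}^i=G_{jkh}^i$. By the first line of (\ref{9}) this reads $(\dot{\partial}_j D_{kh})\eta^i+\sum_{(j,k,h)}D_{jk}\delta_h^i=0$, and I would contract it over $i=j$. The relevant ingredients are: $D_{jk}\delta_h^j+D_{hj}\delta_k^j=2D_{hk}$; the symmetry $D_{kh}=D_{hk}$, which follows from $D_{kh}=\dot{\partial}_h\dot{\partial}_k\stackrel{c}{N_i^i}$; and the homogeneity relation $(\dot{\partial}_j D_{kh})\eta^j=-D_{kh}$, which holds because $G^i$ is $2$-homogeneous in $\eta$, hence $\stackrel{c}{N_j^i}$ is $1$-homogeneous, $G_{jk}^i$ is $0$-homogeneous, and therefore $D_{kh}=G_{ikh}^i$ is $(-1)$-homogeneous in $\eta$. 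With these, the contracted identity collapses to $(n+1)D_{kh}=0$, so $D_{kh}=0$.

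The other two cases are handled the same way. If $D_{j\bar{k}\bar{h}}^i=G_{j\bar{k}\bar{h}}^i$, the second line of (\ref{9}) gives $(\dot{\partial}_j D_{\bar{k}\bar{h}})\eta^i+D_{\bar{k}\bar{h}}\delta_j^i=0$; contracting over $i=j$ and using that $D_{\bar{k}\bar{h}}=\dot{\partial}_{\bar{h}}\dot{\partial}_{\bar{k}}\stackrel{c}{N_i^i}$ is $1$-homogeneous in $\eta$, so $(\dot{\partial}_j D_{\bar{k}\bar{h}})\eta^j=D_{\bar{k}\bar{h}}$, one gets $(n+1)D_{\bar{k}\bar{h}}=0$. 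If $D_{j\bar{k}h}^i=G_{j\bar{k}h}^i$, the third line of (\ref{9}) gives $(\dot{\partial}_j D_{\bar{k}h})\eta^i+D_{\bar{k}h}\delta_j^i+D_{\bar{k}j}\delta_h^i=0$; contracting over $i=j$ and using that $D_{\bar{k}h}=G_{i\bar{k}h}^i$ is $0$-homogeneous in $\eta$, so $(\dot{\partial}_j D_{\bar{k}h})\eta^j=0$, one gets $(n+1)D_{\bar{k}h}=0$. In each case one of the $hv$-, $\bar{h}\bar{v}$-, $h\bar{v}$-Ricci tensors vanishes, and Lemma 4.1 finishes the argument.

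I do not expect any real obstacle. The only point that needs care is the bookkeeping of the $\eta$-homogeneity degrees of the three traces $D_{kh}$, $D_{\bar{k}\bar{h}}$, $D_{\bar{k}h}$; it is precisely what makes the Euler term produced by each contraction combine with the $n$ coming from $\delta_i^i=n$ into the factor $n+1$.
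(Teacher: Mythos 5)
Your proposal is correct and follows essentially the same route as the paper: in each case you read the hypothesis as the vanishing of the bracketed correction term in (\ref{9}), contract over $i=j$, and use the $\eta$-homogeneity of the trace (degree $-1$, $1$, $0$ for $D_{kh}$, $D_{\bar{k}\bar{h}}$, $D_{\bar{k}h}$ respectively) to extract the factor $n+1$, after which Lemma 4.1 propagates the vanishing. The only difference is that the paper carries out just the $D_{j\bar{k}\bar{h}}^i$ case explicitly and declares the others similar, whereas you write out all three with the homogeneity bookkeeping made explicit.
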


\begin{proof}
Suppose that $D_{j\bar{k}\bar{h}}^i=G_{j\bar{k}\bar{h}}^i$.
Then, using (\ref{9}) it results $(\dot{\partial}_jD_{\bar{k}\bar{h}})\eta
^i+D_{\bar{k}\bar{h}}\delta _j^i=0.$ Since $(\dot{\partial}_jD_{\bar{k}\bar{h%
}})\eta ^j=D_{\bar{k}\bar{h}},$ hence $(n+1)D_{\bar{k}\bar{h}}=0,$ and so $%
D_{\bar{k}\bar{h}}=0.$ By Lemma 4.1, $hv$-, and $h\bar{v}$- Ricci tensors
are vanishing. The proof is similar for $D_{jkh}^i=G_{jkh}^i$ or $D_{j\bar{%
k}\bar{h}}^i=G_{j\bar{k}\bar{h}}^i.$
\end{proof}

Corroborating (\ref{9}) with Propositions 4.2 and 4.3, it follows

\begin{corollary}
Let $(M,F)$ be a weakly K\"{a}hler complex Finsler space.

i) If $D_{j\bar{k}h}^i=0$ then $D_{jkh}^i=D_{j\bar{k}\bar{h}}^i=0.$

ii) If $D_{j\bar{k}\bar{h}}^i=0$ then $D_{jkh}^i=D_{j\bar{k}h}^i=0.$
\end{corollary}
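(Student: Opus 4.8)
The plan is to deduce this Corollary directly from equation (\ref{9}) together with Propositions 4.2 and 4.3, which together establish that, in the weakly K\"{a}hler setting, the vanishing of any one of the Douglas invariants is equivalent to the vanishing of all three $hv$-, $\bar h\bar v$-, and $h\bar v$- Ricci tensors as well as the equality of each Douglas invariant with the corresponding $G$-curvature tensor. So I would first record that chain of equivalences, and then simply chase it around.

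For part (i): assume $D_{j\bar k h}^i=0$. By Proposition 4.2 (with the hypothesis playing the role of one of the vanishing Ricci-type tensors, via Lemma 4.1 which propagates vanishing among the three Ricci tensors), or more directly by the third line of (\ref{9}) contracted appropriately, I get that the $h\bar v$- Ricci tensor $D_{\bar k h}$ vanishes; Lemma 4.1 then forces $D_{kh}=D_{\bar k\bar h}=0$ as well. Feeding $D_{kh}=D_{\bar k\bar h}=D_{\bar k h}=0$ back into the first and second lines of (\ref{9}) makes the bracketed correction terms disappear, leaving $D_{jkh}^i=G_{jkh}^i$ and $D_{j\bar k\bar h}^i=G_{j\bar k\bar h}^i$. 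But Proposition 4.3 says precisely that, under weak K\"{a}hler, $D_{j\bar k\bar h}^i=G_{j\bar k\bar h}^i$ (equivalently $D_{jkh}^i=G_{jkh}^i$) forces all three Ricci tensors to vanish \emph{and}, when combined with $D_{j\bar k h}^i=0$ already in hand, one reads off from Propositions 4.2--4.3 that the $G$-curvature tensors themselves vanish; hence $D_{jkh}^i=D_{j\bar k\bar h}^i=0$. Part (ii) is the mirror image: starting from $D_{j\bar k\bar h}^i=0$, Proposition 4.3 gives generalized Berwald with all Ricci tensors zero and $G_{jkh}^i=G_{j\bar k\bar h}^i=0$, whence (\ref{9}) collapses to $D_{jkh}^i=G_{jkh}^i=0$ and $D_{j\bar k h}^i=G_{j\bar k h}^i=0$.

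I expect the only subtle point to be bookkeeping: one must be careful that the hypothesis $D_{j\bar k h}^i=0$ (resp.\ $D_{j\bar k\bar h}^i=0$) is genuinely one of the ``vanishing invariant'' cases handled by Propositions 4.2 and 4.3 respectively, and that the conclusions of those Propositions—namely that $F$ is generalized Berwald with the $G$-curvature tensors expressible through the $\theta^*$-terms, \emph{and} in the weakly K\"{a}hler case those $\theta^*$-terms also drop out (because $\theta^{*i}=0$ for weakly K\"{a}hler, by the remark following (\ref{40}))—really do yield $G_{jkh}^i=G_{j\bar k\bar h}^i=G_{j\bar k h}^i=0$ and not merely the collapsed form of (\ref{9}). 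Once that is pinned down, the argument is a two-line contraction-and-substitute in each case; the real content is entirely contained in the preceding Propositions and in Lemma 4.1, so the Corollary is genuinely a corollary.

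\begin{proof}
By (\ref{9}) and Propositions 4.2 and 4.3 it suffices to trace the equivalences already established.

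\emph{i)} Suppose $D_{j\bar{k}h}^i=0$. Contracting the third relation of (\ref{9}) by $\eta^j$ and using $(\dot{\partial}_jD_{\bar{k}h})\eta^j=D_{\bar{k}h}$ gives, as in the proof of Proposition 4.3, $(n+1)D_{\bar{k}h}=0$, so $D_{\bar{k}h}=0$; by Lemma 4.1 also $D_{kh}=D_{\bar{k}\bar{h}}=0$. Inserting these into the first two relations of (\ref{9}) yields $D_{jkh}^i=G_{jkh}^i$ and $D_{j\bar{k}\bar{h}}^i=G_{j\bar{k}\bar{h}}^i$. On the other hand, since $F$ is weakly K\"{a}hler we have $\theta^{*i}=0$, hence all $\theta^*$-derivatives in (\ref{17}) vanish, so that Proposition 4.2 (whose hypothesis $D_{j\bar{k}h}^i=0$ now holds) gives $G_{j\bar{k}h}^i=0$ and, by the generalized Berwald conclusion, $G_{jkh}^i=G_{j\bar{k}\bar{h}}^i=0$. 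Therefore $D_{jkh}^i=D_{j\bar{k}\bar{h}}^i=0$.

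\emph{ii)} Suppose $D_{j\bar{k}\bar{h}}^i=0$. By Proposition 4.3 (again using $\theta^{*i}=0$), $F$ is generalized Berwald and $G_{jkh}^i=G_{j\bar{k}\bar{h}}^i=G_{j\bar{k}h}^i=0$, while all $hv$-, $\bar{h}\bar{v}$- and $h\bar{v}$- Ricci tensors vanish. Substituting into (\ref{9}) collapses the correction terms, giving $D_{jkh}^i=G_{jkh}^i=0$ and $D_{j\bar{k}h}^i=G_{j\bar{k}h}^i=0$.
\end{proof}
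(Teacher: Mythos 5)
Your proof is essentially the paper's own argument: the Corollary is read off from Propositions 4.2 and 4.3 combined with the weakly K\"{a}hler form (\ref{9}) of the invariants (equivalently, the vanishing of $\theta ^{*i}$ and all its vertical derivatives in (\ref{17})), and your part (ii) together with the second half of part (i) carries this out correctly. However, the opening step of your part (i) contains a false claim: $(\dot{\partial}_{j}D_{\bar{k}h})\eta ^{j}$ is \emph{not} equal to $D_{\bar{k}h}$. Since $G^{i}$ is $(2,0)$-homogeneous, the $h\bar{v}$- Ricci tensor $D_{\bar{k}h}=G_{i\bar{k}h}^{i}$ is $(0,-1)$-homogeneous in $\eta $, so Euler's identity gives $(\dot{\partial}_{j}D_{\bar{k}h})\eta ^{j}=0$; the relation $(\dot{\partial}_{j}D_{\bar{k}\bar{h}})\eta ^{j}=D_{\bar{k}\bar{h}}$ exploited in the proof of Proposition 5.2 is special to the $(1,-2)$-homogeneous tensor $D_{\bar{k}\bar{h}}$. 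Moreover, with the correct homogeneity the trace $i=j$ of the third line of (\ref{9}) is the identity $D_{i\bar{k}h}^{i}\equiv 0$, so no single contraction of that relation yields $D_{\bar{k}h}=0$; deducing $D_{\bar{k}h}=0$ from $D_{j\bar{k}h}^{i}=0$ genuinely requires the argument of Proposition 4.2 (contraction by $\eta ^{j}\eta ^{h}$, then by $\eta _{i}$ via Lemma 2.1). Fortunately this step is redundant in your write-up: once Proposition 4.2 is invoked and $F$ is generalized Berwald, all of $G_{jkh}^{i}$, $G_{j\bar{k}\bar{h}}^{i}$, $G_{j\bar{k}h}^{i}$ and hence all three Ricci traces vanish, and (\ref{9}) immediately gives $D_{jkh}^{i}=D_{j\bar{k}\bar{h}}^{i}=0$. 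The Corollary is therefore established, but the first sentence of your part (i) should be deleted or repaired.
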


\begin{theorem}
Let $(M,F)$ be a weakly K\"{a}hler complex Finsler space. If either $D_{j%
\bar{k}\bar{h}}^i=0$ or $D_{j\bar{k}h}^i=0$ then the space is complex
Berwald.
\end{theorem}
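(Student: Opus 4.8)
The plan is to combine Corollary 5.1 with Theorem 5.1 together with the structural results on $\theta^{*i}$ in the weakly K\"ahler setting. Suppose first that $D_{j\bar{k}\bar{h}}^i=0$. By Proposition 4.3 (applied via the equivalence in Proposition 5.1 and Proposition 5.2, or directly through Corollary 5.1(ii)) we obtain $D_{jkh}^i=D_{j\bar{k}h}^i=0$, so the space is complex Douglas. Then Theorem 4.2 forces $F$ to be generalized Berwald, and in the weakly K\"ahler case this is precisely the hypothesis of Theorem 2.1. Hence $F$ is complex Berwald. The case $D_{j\bar{k}h}^i=0$ is entirely analogous, using Corollary 5.1(i) in place of (ii) to conclude that all three Douglas invariants vanish.

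First I would make the deduction ``weakly K\"ahler $+$ generalized Berwald $\Rightarrow$ complex Berwald'' explicit: this is exactly Theorem 2.1, whose proof shows that under these hypotheses the canonical $(c.n.c.)$ coincides with the Chern--Finsler one, i.e. $F$ is K\"ahler, and then $L_{jk}^i=G_{jk}^i=G_{jk}^i(z)$ yields the complex Berwald condition. So the only real content is to pass from the vanishing of a single Douglas invariant ($D_{j\bar{k}\bar{h}}^i=0$ or $D_{j\bar{k}h}^i=0$) to ``complex Douglas'', and thence to generalized Berwald. For the passage to complex Douglas, in the weakly K\"ahler case the invariants take the simplified form \eqref{9}; by Lemma 4.1, vanishing of any one of the $hv$-, $\bar h\bar v$- or $h\bar v$-Ricci tensors forces all of them to vanish, and Propositions 4.2, 4.3 show the equivalence between $D_{\cdots}^i=0$ and $G_{\cdots}^i=0$ together with the vanishing of the Ricci tensors. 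Chaining these (this is the content of Corollary 5.1) gives all three Douglas invariants equal to zero.

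The potential obstacle is purely bookkeeping: one must verify that the weakly K\"ahler hypothesis is genuinely used when invoking Theorem 4.2, since its general statement also involves the $\theta^{*i}$-conditions \eqref{4}. In the weakly K\"ahler case $\theta^{*i}=0$ by the characterization of weak K\"ahlerity recalled in \S2 (namely $\theta^{*i}$ vanishes iff the space is weakly K\"ahler), hence all derived tensors $\theta_{jk}^{*i},\theta_{jkh}^{*i},\theta_{j\bar k h}^{*i},\theta_{j\bar k\bar h}^{*i}$ vanish identically, so conditions \eqref{4} are automatically satisfied and being complex Douglas reduces exactly to being generalized Berwald. Thus no extra work is needed. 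Once this is noted, the theorem follows by assembling Corollary 5.1, Theorem 4.2, and Theorem 2.1 in sequence; I expect the write-up to be only a few lines.
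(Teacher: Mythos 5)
Your proposal is correct and rests on the same two pillars as the paper's own proof: Propositions 4.2/4.3, which show that the vanishing of either mixed Douglas invariant already forces $G_{jh}^i=G_{jh}^i(z)$ (generalized Berwald), followed by Theorem 2.1 to upgrade to complex Berwald under the weakly K\"ahler hypothesis. The only difference is that you take a detour through Corollary 5.1, the Douglas property and Theorem 4.2 before extracting generalized Berwald, whereas the paper reads it off directly from Propositions 4.2/4.3; since Theorem 4.2's forward direction is itself just those propositions, this is harmless.
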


\begin{proof}
If either $D_{j\bar{k}\bar{h}}^i=0$ or $D_{j\bar{k}h}^i=0$
then $G_{jh}^i=G_{jh}^i(z)$, which means that the space is generalized
Berwald. The proof is completed by Theorem 2.1.
\end{proof}

\begin{theorem}
If $(M,F)$ is a complex weakly K\"{a}hler Douglas space then it is Berwald.
\end{theorem}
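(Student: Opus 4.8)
The plan is to combine Definition 4.1 with the results already established in this section. A complex Douglas space is one for which all three projective invariants of Douglas type vanish, in particular $D_{j\bar{k}h}^i=0$ and $D_{j\bar{k}\bar{h}}^i=0$. So the argument reduces to invoking Theorem 5.3: since $(M,F)$ is weakly K\"{a}hler and satisfies (say) $D_{j\bar{k}\bar{h}}^i=0$, that theorem immediately gives that $(M,F)$ is complex Berwald. No further work is needed.

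More explicitly, I would proceed as follows. First, recall from Definition 4.1 that being Douglas means $D_{jkh}^i=D_{j\bar{k}\bar{h}}^i=D_{j\bar{k}h}^i=0$. Second, since by hypothesis $(M,F)$ is also weakly K\"{a}hler, the pair of conditions ``weakly K\"{a}hler'' and ``$D_{j\bar{k}\bar{h}}^i=0$'' are exactly the hypotheses of Theorem 5.3. Third, apply Theorem 5.3 to conclude that $(M,F)$ is a complex Berwald space. (Alternatively one could feed $D_{j\bar{k}h}^i=0$ into Theorem 5.3; either branch works.)

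It is worth noting the internal consistency with the earlier material: Theorem 5.3 itself rests on Proposition 4.3 and Proposition 4.4, which show that $D_{j\bar{k}\bar{h}}^i=0$ (or $D_{j\bar{k}h}^i=0$) forces $G_{jh}^i=G_{jh}^i(z)$, i.e. generalized Berwald, and then Theorem 2.1 upgrades ``weakly K\"{a}hler $+$ generalized Berwald'' to ``complex Berwald''. So unwinding the chain, the real content is: vanishing of one mixed Douglas invariant kills the $\eta$-dependence of the Berwald coefficients, and weak K\"{a}hlerity then kills the discrepancy between the canonical and Chern--Finsler nonlinear connections, yielding $L_{jk}^i=L_{jk}^i(z)$ together with the K\"{a}hler property.

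The main (and only) potential obstacle is purely bookkeeping: one must be sure that the definition of complex Douglas space really does include the mixed invariants $D_{j\bar{k}h}^i$ and $D_{j\bar{k}\bar{h}}^i$ vanishing and not merely the ``holomorphic'' invariant $D_{jkh}^i$. Given Definition 4.1 states all three vanish, this is immediate, so the proof is essentially a one-line citation of Theorem 5.3.
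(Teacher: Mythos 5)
Your proof is correct and follows exactly the paper's route: the paper's own proof is the one-line observation that a Douglas space satisfies $D_{j\bar{k}\bar{h}}^i=0$ (and $D_{j\bar{k}h}^i=0$), so the weakly K\"{a}hler hypothesis lets one invoke the preceding theorem (Theorem 5.1 in the paper's numbering, not 5.3 as you label it; likewise the supporting results are Propositions 4.2 and 4.3) to conclude the space is complex Berwald. Apart from this off-by-one in the cross-references, the argument and its unwinding through generalized Berwald plus Theorem 2.1 match the paper precisely.
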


\begin{proof}
It results by Theorem 5.1.
\end{proof}

Note that, the weakly K\"{a}hler property is preserved by the projective
changes (for proof details see Theorem 3.2 from \cite{Al-Mu}), and then we
have
\begin{equation}
\tilde{G}^i=G^i+P\eta ^i,  \label{9''}
\end{equation}
where $P$ is a $(1,0)$ - homogeneous function. Under this projective change,
we obtain
\begin{eqnarray}
\stackrel{c}{\tilde{N}_j^i} &=&\stackrel{c}{N_j^i}+P_j\eta ^i+P\delta
_j^i\;;\;\stackrel{c}{\tilde{\delta}_k}=\stackrel{c}{\delta _k}-(P_k\eta
^i+P\delta _k^i)\dot{\partial}_i;  \label{9'} \\
\tilde{G}_{jk}^i &=&G_{jk}^i+P_{jk}\eta ^i+P_k\delta _j^i+P_j\delta _k^i\;;\;%
\tilde{G}_{j\bar{k}}^i=G_{j\bar{k}}^i+P_{j\bar{k}}\eta ^i+P_{\bar{k}}\delta
_j^i,  \nonumber
\end{eqnarray}
where $P_{jk}:=\dot{\partial}_kP_j=P_{kj},$ $P_{\bar{k}}:=\dot{\partial}_{%
\bar{k}}P,$ $P_{j\bar{k}}:=\dot{\partial}_{\bar{k}}P_j=\dot{\partial}_jP_{%
\bar{k}}.$ Moreover, the $(1,0)$ - homogeneity of $P$ implies
\begin{equation}
P_k\eta ^k=P\;;\;P_{j\bar{k}}\bar{\eta}^k=0\;;\;P_{jk}\eta ^k=0\;;\;P_{\bar{k%
}}\bar{\eta}^k=0\;;\;P_{j\bar{k}}\eta ^j=P_{\bar{k}}.  \nonumber
\end{equation}

Next, we shall study the $hh-$ curvatures tensor $K_{jkh}^i$. Under the
projective change (\ref{9''}), we have
\begin{eqnarray}
\tilde{K}_{kh}^i &=&K_{kh}^i+\mathcal{A}_{(k,h)}[P_{k\stackrel{B}{|}h}\eta
^i+(P_{\stackrel{B}{|}h}-PP_h)\delta _k^i];  \label{20} \\
\tilde{K}_{jkh}^i &=&K_{jkh}^i+\mathcal{A}_{(k,h)}[P_{jk\stackrel{B}{|}%
h}\eta ^i+P_{k\stackrel{B}{|}h}\delta _j^i+(P_{j\stackrel{B}{|}%
h}-P_jP_h-PP_{jh})\delta _k^i],  \nonumber
\end{eqnarray}
where '$_{\stackrel{B}{|}h}$' is the horizontal covariant derivative with
respect to $B\Gamma $ and $\mathcal{A}_{(k,h)}$ is the alternate operator,
for example $\mathcal{A}_{(k,h)}\{P_{k\stackrel{B}{|}h}\}:=P_{k\stackrel{B}{|%
}h}-P_{h\stackrel{B}{|}k}.$ Next we make the following notations
\[
X_{kh}:=P_{k\stackrel{B}{|}h}-P_{h\stackrel{B}{|}k}\;;\;X_h:=P_{\stackrel{B}{%
|}h}-PP_h
\]
which have the properties
\begin{eqnarray*}
\dot{\partial}_jX_h &=&P_{j\stackrel{B}{|}h}-P_jP_h-PP_{jh}\;;\;\dot{\partial%
}_jX_h-\dot{\partial}_hX_j=P_{j\stackrel{B}{|}h}-P_{h\stackrel{B}{|}%
j}\;=X_{jh};\; \\
\dot{\partial}_jX_{kh} &=&P_{kj\stackrel{B}{|}h}-P_{hj\stackrel{B}{|}k}\;;\;(%
\dot{\partial}_jX_h)\eta ^j=X_h\;;\;(\dot{\partial}_jX_{kh})\eta ^j=0\;;\; \\
X_{kj}\eta ^j &=&P_{k\stackrel{B}{|}0}-P_{\stackrel{B}{|}k}:=X_{k0}.
\end{eqnarray*}

By means of these, the changes (\ref{20}) become
\begin{eqnarray}
\tilde{K}_{kh}^i &=&K_{kh}^i+X_{kh}\eta ^i+X_h\delta _k^i-X_k\delta _h^i
\nonumber \\
\tilde{K}_{jkh}^i &=&K_{jkh}^i+(\dot{\partial}_jX_{kh})\eta ^i+X_{kh}\delta
_j^i+(\dot{\partial}_jX_h)\delta _k^i-(\dot{\partial}_jX_k)\delta _h^i.
\label{21}
\end{eqnarray}

Now, we introduce the $hh$- Ricci tensor $K_{kh}:=K_{ikh}^i$. Another
important tensor is $H_{jk}:=K_{jki}^i$. The link between these horizontal
curvature tensors is $H_{kj}-H_{jk}=K_{jk}.$ Summing by $i=j$ and then $i=h$
together with a contraction by $\eta ^j,$ in the second relation from (\ref
{21}), it yields
\begin{eqnarray}
X_{kh} &=&\frac 1{n+1}(\tilde{K}_{kh}-K_{kh})=\frac 1{n+1}[(\tilde{H}_{hk}-%
\tilde{H}_{kh})-(H_{hk}-H_{kh})];\;  \label{22} \\
\tilde{H}_{0k} &=&H_{0k}+X_{k0}-(n-1)X_k\;.  \nonumber
\end{eqnarray}
From here, it results
\begin{eqnarray}
X_{k0} &=&\frac 1{n+1}[(\tilde{H}_{0k}-\tilde{H}_{k0})-(H_{0k}-H_{k0})];
\label{23} \\
X_k &=&-\frac 1{n+1}(\tilde{H}_k-H_k)\;\mbox{with}\;H_k:=\frac
1{n-1}(nH_{0k}+H_{k0}),  \nonumber
\end{eqnarray}
for any $n\geq 2.$ Moreover,
\begin{equation}
K_{jk}=\dot{\partial}_jH_{k0}-\dot{\partial}_kH_{j0}=\dot{\partial}_kH_{0j}-%
\dot{\partial}_jH_{0k}\;\mbox{and}\;H_{jk}=\dot{\partial}_jH_{0k}.
\label{24}
\end{equation}
Now, substituting (\ref{22}) and (\ref{23}) in (\ref{21}) we obtain the
following invariants
\begin{eqnarray}
W_{kh}^i &=&K_{kh}^i+\frac 1{n+1}\mathcal{A}_{(k,h)}(H_{kh}\eta ^i+H_h\delta
_k^i);  \label{25} \\
W_{jkh}^i &=&K_{jkh}^i+\frac 1{n+1}\mathcal{A}_{(k,h)}[(\dot{\partial}%
_jH_{kh})\eta ^i+H_{kh}\delta _j^i+(\dot{\partial}_jH_h)\delta _k^i],
\nonumber
\end{eqnarray}
in which the second formula is a \textit{\ projective curvature invariant of
Weyl type}. Note that, if $(M,F)$ is K\"{a}hler, then $W_{jkh}^i=0.$

\begin{theorem}
Let $(M,F)$ be a weakly K\"{a}hler complex Finsler space of complex
dimension $n\geq 2.$

i) Then, $W_{jkh}^i=0$ if and only if $W_{kh}^i=0;$

ii) If $K_{kh}=0$ then $W_{jkh}^i=K_{jkh}^i+\frac 1{n-1}(H_{jh}\delta
_k^i-H_{jk}\delta _h^i);$

iii) If $H_{kh}=0$ then $W_{jkh}^i=K_{jkh}^i.$
\end{theorem}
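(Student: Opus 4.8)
The plan is to work directly from the two invariant formulas in (\ref{25}), using the trace tensors $K_{kh}=K^i_{ikh}$ and $H_{jk}=K^i_{jki}$ together with the relations in (\ref{24}), namely $H_{jk}=\dot\partial_j H_{0k}$ and $H_{kj}-H_{jk}=K_{jk}$.

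\smallskip

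For part i), first I would trace the second formula in (\ref{25}) on $i=j$. Since $\mathcal A_{(k,h)}$ is applied to $H_{kh}\eta^i+H_{kh}\delta^i_j+\dots$, summing over $i=j$ collapses the $\delta$-terms into dimension factors and produces exactly $W_{kh}=K_{kh}+\text{(multiple of }\mathcal A_{(k,h)}H_{kh})$; because $\mathcal A_{(k,h)}H_{kh}=H_{kh}-H_{hk}=-K_{kh}$, the trace $W^i_{ikh}$ must vanish identically, so no information is lost there. Instead, I would contract on a lower index, i.e. compute $W^i_{jki}$: using $K^i_{jki}=H_{jk}$ and $\dot\partial_j H_h$ contracted appropriately, together with $\dot\partial_j(H_{kh}\eta^i)$ giving $H_{kh}\delta^i_j+(\dot\partial_j H_{kh})\eta^i$, this yields a linear expression relating $W^i_{jki}$ to $W_{kh}$ and its traces. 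The upshot is that $W^i_{jkh}=0$ forces $W_{kh}=0$ (one direction being immediate since $W_{kh}$ is obtained from $W^i_{jkh}$ by contraction). For the converse, I would observe that $W^i_{jkh}$ is, by construction of (\ref{21})--(\ref{25}), the $\dot\partial_j$-derivative of $W^i_{kh}$ up to the very correction terms that have been added; more precisely, differentiating $W^i_{kh}$ in $\eta^j$ and using $\dot\partial_j H_{kh}$ and $\dot\partial_j H_h$ reproduces $W^i_{jkh}$ exactly, so $W^i_{kh}=0$ implies $W^i_{jkh}=0$. The homogeneity facts ($K^i_{jkh}\eta^j=K^i_{kh}$, and the $(1,0)$-homogeneity of $H_h$, $H_{kh}$ inherited from $P$) are what make this differentiation/contraction correspondence clean.

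\smallskip

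For parts ii) and iii) the arithmetic is more direct. If $K_{kh}=0$, then from $H_{kh}-H_{hk}=-K_{kh}=0$ we get $H_{kh}=H_{hk}$, so every occurrence of $\mathcal A_{(k,h)}H_{kh}$ in (\ref{25}) vanishes and only the $\mathcal A_{(k,h)}(\dot\partial_j H_h)\delta^i_k$-type term survives. It remains to identify $H_h$ in terms of $H_{kh}$ when $K_{kh}=0$: from (\ref{23}), $H_k=\frac1{n-1}(nH_{0k}+H_{k0})$, and $H_{0k}=H_{k0}$ here (again by symmetry of $H$), so $H_k=\frac{n+1}{n-1}H_{0k}$, whence $\dot\partial_j H_k=\frac{n+1}{n-1}\dot\partial_j H_{0k}=\frac{n+1}{n-1}H_{jk}$ by (\ref{24}). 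Plugging this into $W^i_{jkh}=K^i_{jkh}+\frac1{n+1}\mathcal A_{(k,h)}[(\dot\partial_j H_h)\delta^i_k]$ gives $W^i_{jkh}=K^i_{jkh}+\frac1{n-1}(H_{jh}\delta^i_k-H_{jk}\delta^i_h)$, which is the claim. For iii), if $H_{kh}=0$ then also $K_{kh}=H_{kh}-H_{hk}=0$ and, by (\ref{24}), $H_{0k}=0$ so $H_k=0$ and all correction terms in both formulas of (\ref{25}) vanish, leaving $W^i_{jkh}=K^i_{jkh}$ directly.

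\smallskip

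The main obstacle I anticipate is part i): making the ``$W^i_{kh}$ is essentially $\dot\partial$ of $W^i_{jkh}$'' correspondence airtight requires carefully tracking the alternation operator $\mathcal A_{(k,h)}$ through a $\dot\partial_j$ differentiation and checking that the extra $\delta^i_j$-terms generated by Leibniz on $H_{kh}\eta^i$ and $H_h\delta^i_k$ match exactly the coefficients already present in the second line of (\ref{25}); the homogeneity identities for $H_{kh}$ and $H_h$ are exactly what is needed, but the bookkeeping is the delicate point. Once that single identity is verified, both implications of i) follow, and ii)--iii) are then routine substitutions.
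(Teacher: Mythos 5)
Your proposal is correct and follows essentially the same route as the paper: part i) rests on the observation that $W_{jkh}^i=\dot{\partial}_jW_{kh}^i$ and, by the homogeneity identities, $W_{jkh}^i\eta ^j=W_{kh}^i$, while parts ii) and iii) are direct substitutions using $H_{kh}=H_{hk}$ (from $K_{kh}=0$) together with (\ref{23}) and (\ref{24}), exactly as in the paper (your computation $\dot{\partial}_jH_h=\frac{n+1}{n-1}H_{jh}$ supplies the detail the paper leaves implicit). The only superfluous piece is the detour in i) through the traces $W_{ikh}^i$ and $W_{jki}^i$; the $\eta ^j$-contraction you invoke at the end is all that is needed for the forward implication.
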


\begin{proof}
i) If $W_{jkh}^i=0,$ then

$K_{jkh}^i=-\frac 1{n+1}\mathcal{A}_{(k,h)}[(\dot{\partial}_jH_{kh})\eta
^i+H_{kh}\delta _j^i+(\dot{\partial}_jH_h)\delta _k^i],$\newline
which contracted by $\eta ^j$ give $K_{kh}^i=-\frac 1{n+1}\mathcal{A}%
_{(k,h)}(H_{kh}\eta ^i+H_h\delta _k^i)$ and hence, $W_{kh}^i=0.$

Conversely, if $W_{kh}^i=0$ then $K_{kh}^i=-\frac 1{n+1}\mathcal{A}%
_{(k,h)}(H_{kh}\eta ^i+H_h\delta _k^i)$. Differentiating with respect to $\eta ^j$, it results

$K_{jkh}^i=-\frac 1{n+1}\mathcal{A}_{(k,h)}[(\dot{\partial}_jH_{kh})\eta
^i+H_{kh}\delta _j^i+(\dot{\partial}_jH_h)\delta _k^i]$, that is, $%
W_{jkh}^i=0. $

ii) If $K_{kh}=0$ then $H_{kj}=H_{jk}.$ Substituting into (\ref{25}) and
using (\ref{23}) and (\ref{24}), it results our claim. iii) immediately results by (\ref{25}%
) and (\ref{23}).
\end{proof}

In order to obtain another projective curvature invariant of Weyl type we
assume that the weakly K\"{a}hler complex Finsler metric $F$ is generalized
Berwald. Thus, we have $K_{j\bar{k}\bar{h}}^i=0$, $K_{j\bar{k}h}^i=-%
\stackrel{c}{\delta _{\bar{k}}}G_{jh}^i$ and the Bianchi identities get $%
\dot{\partial}_rK_{j\overline{k}h}^i=0$ and $\dot{\partial}_{\bar{r}}K_{j%
\overline{k}h}^i=0.$

Note that by a projective change, the generalized Berwald property of the
metric $L$ is transferred to the metric $\tilde{L}.$ Moreover, the
generalized Berwald property together with the weakly K\"{a}hler assumption
implies that $F$ and $\tilde{F}$ are complex Berwald metrics (Theorem 2.1).
Hence, $K_{j\bar{k}h}^i=-\delta _{\bar{k}}L_{jh}^i.$ Therefore, under these
assumptions, the function $P$ from the projective change (\ref{9''}) is
holomorphic with respect to $\eta $, i.e. $P_{\bar{k}}=0,$ (see Proposition
3.1 from \cite{Al-Mu}), and
\begin{eqnarray}
\tilde{N}_j^i &=&N_j^i+P_j\eta ^i+P\delta _j^i\;;\;\tilde{\delta}_k=\delta
_k-(P_k\eta ^i+P\delta _k^i)\dot{\partial}_i;  \label{9'''} \\
\tilde{L}_{jk}^i &=&L_{jk}^i+P_{jk}\eta ^i+P_k\delta _j^i+P_j\delta _k^i\;;\;%
\tilde{G}_{j\bar{k}}^i=G_{j\bar{k}}^i=0.  \nonumber
\end{eqnarray}
Consequently,
\begin{eqnarray}
\tilde{K}_{j\bar{k}h}^i &=&K_{j\bar{k}h}^i-P_{jh|\bar{k}}\eta ^i-P_{j|\bar{k}%
}\delta _h^i-P_{h|\bar{k}}\delta _j^i;  \label{10'} \\
0 &=&P_{jhr|\bar{k}}\eta ^i+P_{jh|\bar{k}}\delta _r^i+P_{jr|\bar{k}}\delta
_h^i+P_{hr|\bar{k}}\delta _j^i.  \nonumber
\end{eqnarray}

Next, we consider the $h\bar{h}$ - Ricci tensor $K_{\bar{k}h}:=K_{i\bar{k}%
h}^i.$ Since $F$ is K\"{a}hler, $K_{i\bar{k}h}^i=K_{h\bar{k}i}^i.$ Making $%
i=j$ in (\ref{10'}), it results
\begin{eqnarray}
P_{h|\bar{k}} &=&-\frac 1{n+1}(\tilde{K}_{\bar{k}h}-K_{\bar{k}h});
\label{11} \\
P_{hr|\bar{k}} &=&0,  \nonumber
\end{eqnarray}
which substituted into the first equation from (\ref{10'}), give a new
\textit{projective curvature invariant of Weyl type,} which is valid only
for the complex Berwald spaces,
\begin{equation}
W_{j\bar{k}h}^i=K_{j\bar{k}h}^i-\frac 1{n+1}(K_{\bar{k}j}\delta _h^i+K_{\bar{%
k}h}\delta _j^i).  \label{120}
\end{equation}

Note that for any complex Berwald space, the $h\bar{h}$ - curvatures
coefficients of Chern-Finsler connection can be rewritten as $R_{j\bar{k}%
h}^i=K_{j\bar{k}h}^i+K_{m\bar{k}h}^l\eta ^mC_{jl}^i.$ So that, $R_{\bar{r}j%
\bar{k}h}=K_{\bar{r}j\bar{k}h}+K_{m\bar{k}h}^l\eta ^mC_{j\bar{r}l}$, where $%
K_{\bar{r}j\bar{k}h}:=K_{j\bar{k}h}^ig_{i\bar{r}},$ and $R_{\bar{r}j\bar{k}%
h}\eta ^j=K_{\bar{r}j\bar{k}h}\eta ^j.$ This implies
\[
\mathcal{K}_F(z,\eta )=\frac 2{L^2}K_{\bar{r}j\bar{k}h}\bar{\eta}^r\eta ^j%
\bar{\eta}^k\eta ^h.
\]

\begin{theorem}
Let $(M,F)$ be a connected complex Berwald space of complex dimension $n\geq
2.$ Then, $W_{j\bar{k}h}^i=0$ if and only if $K_{\bar{m}j\bar{k}h}=\frac{%
\mathcal{K}_F}4(g_{j\bar{k}}g_{h\bar{m}}+g_{h\bar{k}}g_{j\bar{m}}).$ In this
case, $\mathcal{K}_F=c,$ where $c$ is a constant on $M$ and the space is
either purely Hermitian with $K_{\bar{k}j}=\frac{c(n+1)}4g_{j\bar{k}}$ or
non purely Hermitian with $c=0$ and $K_{j\bar{k}h}^i=0.$
\end{theorem}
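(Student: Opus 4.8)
The plan is to prove the equivalence $W_{j\bar{k}h}^{i}=0\iff K_{\bar{m}j\bar{k}h}=\tfrac{\mathcal{K}_{F}}{4}(g_{j\bar{k}}g_{h\bar{m}}+g_{h\bar{k}}g_{j\bar{m}})$ by lowering the index $i$ in (\ref{120}) with $g_{i\bar{m}}$ and exploiting the homogeneity relations. First I would write $W_{\bar{m}j\bar{k}h}:=W_{j\bar{k}h}^{i}g_{i\bar{m}}=K_{\bar{m}j\bar{k}h}-\tfrac{1}{n+1}(K_{\bar{k}j}g_{h\bar{m}}+K_{\bar{k}h}g_{j\bar{m}})$ and note that, since the space is complex Berwald, $K_{j\bar{k}h}^{i}$ is holomorphic and $0$-homogeneous in $\eta$, as are the traces $K_{\bar{k}j}=K_{i\bar{k}j}^{i}$; moreover $K_{j\bar{k}h}^{i}\eta^{j}=K_{\bar{k}h}^{i}=\dot\partial_{h}(\dot\partial_{\bar{k}}\,?)$-type contractions give $K_{\bar{k}j}\eta^{j}=K_{i\bar{k}j}^{i}\eta^{j}$, which I would compute from $2G^{i}=L^{i}_{jk}\eta^{j}\eta^{k}$ and the formula $K_{j\bar{k}h}^{i}=-\delta_{\bar{k}}L_{jh}^{i}$. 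The assumption $W_{j\bar{k}h}^{i}=0$ then says $K_{\bar{m}j\bar{k}h}=\tfrac{1}{n+1}(K_{\bar{k}j}g_{h\bar{m}}+K_{\bar{k}h}g_{j\bar{m}})$; contracting with $\eta^{j}\bar\eta^{k}\eta^{h}\bar\eta^{m}$ and using $L=g_{i\bar{j}}\eta^{i}\bar\eta^{j}$ together with $\mathcal{K}_{F}=\tfrac{2}{L^{2}}K_{\bar{r}j\bar{k}h}\bar\eta^{r}\eta^{j}\bar\eta^{k}\eta^{h}$ identifies $K_{\bar{k}j}\bar\eta^{k}\eta^{j}=\tfrac{n+1}{4}\mathcal{K}_{F}L$; feeding this back and using $\dot\partial$-homogeneity repeatedly (differentiate the contracted identity in $\eta$ and $\bar\eta$ and re-symmetrize) should force $K_{\bar{k}j}=\tfrac{(n+1)}{4}\mathcal{K}_{F}g_{j\bar{k}}$ and then the symmetric form for $K_{\bar{m}j\bar{k}h}$. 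The converse is a direct substitution: plug the symmetric expression into $K_{\bar{m}j\bar{k}h}-\tfrac{1}{n+1}(K_{\bar{k}j}g_{h\bar{m}}+K_{\bar{k}h}g_{j\bar{m}})$, compute $K_{\bar{k}j}=g^{h\bar{m}}K_{\bar{m}j\bar{k}h}$ which comes out to $\tfrac{n+1}{4}\mathcal{K}_{F}g_{j\bar{k}}$, and cancel.

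Next I would establish constancy of $\mathcal{K}_{F}$. Having $K_{\bar{m}j\bar{k}h}=\tfrac{\mathcal{K}_{F}}{4}(g_{j\bar{k}}g_{h\bar{m}}+g_{h\bar{k}}g_{j\bar{m}})$, I would appeal to the Bianchi-type identities (\ref{111}) valid for complex Berwald spaces, $K_{j\bar{r}k|\bar{h}}^{i}=K_{j\bar{h}k|\bar{r}}^{i}$ and $K_{j\bar{r}k|h}^{i}=K_{j\bar{r}h|k}^{i}$, together with the fact that for a complex Berwald space $g_{i\bar{j}|k}=0$ and $g_{i\bar{j}|\bar{k}}=0$ (metric compatibility of Chern-Finsler), so that covariant differentiation only hits the scalar $\mathcal{K}_{F}$. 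Taking the horizontal covariant derivative of the symmetric expression, the antisymmetrization forced by the Bianchi identity yields $(\mathcal{K}_{F})_{|h}(\text{sym in }j,k)=(\mathcal{K}_{F})_{|k}(\text{sym in }j,h)$; contracting with $\eta^{j}$ or using the non-degeneracy of the $g$-combinations for $n\geq2$ kills the gradient, giving $(\mathcal{K}_{F})_{|h}=0$ and similarly $(\mathcal{K}_{F})_{|\bar{h}}=0$; since $\mathcal{K}_{F}$ is also $0$-homogeneous, the strong maximum principle quoted before Lemma 4.1 (constancy of holomorphic $0$-homogeneous functions) gives $\mathcal{K}_{F}=c$ constant on the connected $M$. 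This is the step I expect to be the main obstacle: I must be careful that $\mathcal{K}_{F}$ genuinely is holomorphic in $\eta$ — a priori it is only $0$-homogeneous — so I would instead argue that $(\mathcal{K}_{F})_{|\bar h}=0$ forces holomorphicity in $\eta$ via $\dot\partial_{\bar h}$ (using $\delta_{\bar h}=\partial_{\bar h}-N^{j}_{\bar h}\dot\partial_{j}$ and the Berwald structure), and only then invoke the maximum principle.

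Finally I would split into the two cases according to whether $F$ is purely Hermitian. If $F$ is purely Hermitian, then $g_{i\bar{j}}=g_{i\bar{j}}(z)$, the vertical Cartan tensors $C_{jk}^{i}$ vanish, and the identity $K_{\bar{k}j}=\tfrac{c(n+1)}{4}g_{j\bar{k}}$ is exactly what was derived above with $\mathcal{K}_{F}=c$; nothing more is needed. If $F$ is not purely Hermitian, I would contract the symmetric curvature identity $K_{\bar{m}j\bar{k}h}=\tfrac{c}{4}(g_{j\bar{k}}g_{h\bar{m}}+g_{h\bar{k}}g_{j\bar{m}})$ against a vertical vector and use the relation recorded in the excerpt, $R_{\bar{r}j\bar{k}h}=K_{\bar{r}j\bar{k}h}+K_{m\bar{k}h}^{l}\eta^{m}C_{j\bar{r}l}$, combined with the known symmetry properties of the Chern-Finsler curvature $R_{\bar r j\bar k h}$ (symmetric under $j\leftrightarrow h$ and $\bar r\leftrightarrow\bar k$) and the fact that $C_{j\bar r l}\neq0$ somewhere; the over-determination forces $c=0$, whence $K_{\bar{m}j\bar{k}h}=0$ and therefore $K_{j\bar{k}h}^{i}=0$. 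Here one needs that $K_{m\bar k h}^{l}\eta^{m}=K_{\bar k h}^{l}=-\delta_{\bar k}G^{l}_{\phantom{l}h}$-type term is forced to vanish when $c=0$, which follows by raising indices in $K_{\bar{m}j\bar{k}h}=0$. Assembling the three paragraphs gives the theorem.
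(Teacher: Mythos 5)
Your overall route coincides with the paper's (lower the index with $g_{i\bar m}$, contract, use the Berwald Bianchi identities, then split on pure Hermiticity), but there is a genuine gap at the decisive step of your first paragraph. From $W_{j\bar kh}^i=0$ you correctly reach the scalar identity $K_{\bar kj}\eta^j\bar\eta^k=\frac{n+1}{4}L\mathcal{K}_F$, but "using $\dot\partial$-homogeneity repeatedly and re-symmetrizing" cannot upgrade this to $K_{\bar kj}=\frac{n+1}{4}\mathcal{K}_Fg_{j\bar k}$: any Hermitian form $A_{j\bar k}(z)\eta^j\bar\eta^k/L$ is $0$-homogeneous, and double vertical differentiation of the contracted identity merely reproduces it, since a priori $\mathcal{K}_F$ may still depend on $\eta$. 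The missing ingredient is the Chern--Finsler curvature symmetry $R_{\bar rj\bar kh}=R_{\bar rh\bar kj}$, which after conjugation yields $K_{\bar rj\bar kh}\eta^j=K_{\bar kj\bar rh}\eta^j$; combined with $\dot\partial_{\bar m}K_{\bar kh}=0$ (Bianchi, Berwald case) this gives $\mathcal{K}_F\bar\eta_m=\frac{4}{n+1}K_{\bar mj}\eta^j$, hence $\dot\partial_{\bar m}\mathcal{K}_F=0$, and only then does differentiation in $\eta^l$ produce $K_{\bar ml}=\frac{n+1}{4}\mathcal{K}_Fg_{l\bar m}$. You do cite exactly this symmetry, but only in your third paragraph for the $c=0$ dichotomy; it is needed earlier and it also disposes of your worry about the holomorphicity of $\mathcal{K}_F$ in $\eta$ before any maximum principle is invoked.

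Two smaller points. For constancy on $M$, contracting the Bianchi identity $K_{j\bar rk|\bar h}^i=K_{j\bar hk|\bar r}^i$ does not directly "kill the gradient": it gives $\mathcal{K}_{F|\bar h}=\frac1L\mathcal{K}_{F|\bar 0}\bar\eta_h$, which only ties the gradient to its trace; one must then apply a vertical covariant derivative and use $\mathcal{K}_{F|\bar h}|_j=0$ to extract the factor $(n-1)\mathcal{K}_{F|\bar 0}=0$, where $n\geq 2$ enters. Finally, your detour through $R_{\bar rj\bar kh}=K_{\bar rj\bar kh}+K_{m\bar kh}^l\eta^mC_{j\bar rl}$ for the dichotomy is unnecessarily roundabout: since $K_{\bar kj}$ depends only on $z$ in a Berwald space, differentiating $K_{\bar kj}=\frac{c(n+1)}{4}g_{j\bar k}$ in $\eta^l$ gives $c\,\dot\partial_lg_{j\bar k}=0$ at once, forcing either $c=0$ (and then $K_{j\bar kh}^i=0$) or $g_{j\bar k}=g_{j\bar k}(z)$.
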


\begin{proof}
Using (\ref{120}) and $W_{j\bar{k}h}^i=0$, it results
\begin{equation}
K_{j\bar{k}h}^i=\frac 1{n+1}(K_{\bar{k}j}\delta _h^i+K_{\bar{k}h}\delta _j^i)
\label{2}
\end{equation}
which contracted with $g_{i\bar{m}}$ gives
\begin{equation}
K_{\bar{m}j\bar{k}h}=\frac 1{n+1}(K_{\bar{k}j}g_{h\bar{m}}+K_{\bar{k}h}g_{j%
\bar{m}}),  \label{3}
\end{equation}
and
\begin{equation}
R_{\bar{m}j\bar{k}h}=\frac 1{n+1}(K_{\bar{k}j}g_{h\bar{m}}+K_{\bar{k}h}g_{j%
\bar{m}}+K_{\bar{k}l}\eta ^lC_{j\bar{m}h}),  \label{3'}
\end{equation}
where $C_{j\bar{m}h}:=\dot{\partial}_hg_{j\bar{m}}$.

Since $R_{\bar{r}j\bar{k}h}=R_{\bar{r}h\bar{k}j}$, see \cite{A-P} p. 105, it
results $R_{\bar{r}j\bar{k}h}=R_{\bar{k}j\bar{r}h},$ and therefore,
\begin{equation}
K_{\bar{r}j\bar{k}h}\eta ^j=K_{\bar{k}j\bar{r}h}\eta ^j.  \label{100}
\end{equation}
From (\ref{3}) also results
\begin{equation}
\mathcal{K}_F=\frac 4{L(n+1)}K_{\bar{k}j}\eta ^j\bar{\eta}^k,  \label{50}
\end{equation}
which, indeed, can be rewritten as $L\mathcal{K}_F=\frac 4{n+1}K_{\bar{k}%
j}\eta ^j\bar{\eta}^k.$ Differentiating this last formula with respect to $%
\bar{\eta}^m$ and using again the Bianchi identity $\dot{\partial}_{\bar{m}%
}K_{\overline{k}h}=0,$ it follows that $\mathcal{K}_F\bar{\eta}_m+L(\dot{%
\partial}_{\bar{m}}\mathcal{K}_F)=\frac 4{n+1}K_{\bar{m}j}\eta ^j$.

Now, due to (\ref{100}), we obtain
\begin{equation}
\mathcal{K}_F\bar{\eta}_m=\frac 4{n+1}K_{\bar{m}j}\eta ^j.  \label{51}
\end{equation}
Thus, $L(\dot{\partial}_{\bar{m}}\mathcal{K}_F)=0$ and so, $\mathcal{K}_F$
depends only on $z.$ Differentiating (\ref{51}) with respect to $\eta ^l,$
it gives $K_{\bar{m}l}=\frac{(n+1)\mathcal{K}_F}4g_{l\bar{m}},$ which
plugged into (\ref{3}) yields $K_{\bar{m}j\bar{k}h}=\frac{\mathcal{K}_F}%
4(g_{j\bar{k}}g_{h\bar{m}}+g_{h\bar{k}}g_{j\bar{m}}).$

Conversely, since $K_{j\bar{k}h}^i=\frac{\mathcal{K}_F}4(g_{j\bar{k}}\delta
_h^i+g_{h\bar{k}}\delta _j^i)$ and $K_{\bar{k}h}=\frac{(n+1)\mathcal{K}_F}%
4g_{h\bar{k}},$ the relation (\ref{120}) implies $W_{j\bar{k}h}^i=0.$

In order to prove that $\mathcal{K}_F$ is a constant on $M$ we use the
Bianchi identity $K_{j\bar{r}k|\bar{h}}^i=K_{j\bar{h}k|\bar{r}}^i$ from (\ref
{111}). Contracting by $g_{i\bar{m}}\bar{\eta}^m\eta ^j\bar{\eta}^r\eta ^k,$
it gives
\begin{equation}
\mathcal{K}_{F|\overline{h}}=\frac 1L\mathcal{K}_{F|\bar{0}}\overline{\eta }%
_h.  \label{IV.3}
\end{equation}

Taking into account $\mathcal{K}_{F|\overline{h}}|_j=\mathcal{K}_F|_{j|%
\overline{h}}=0,$ where '$|_k$ ' is the vertical covariant derivative with
respect to Chern-Finsler connection, and deriving (\ref{IV.3}), we easily
deduce
\[
0=\mathcal{K}_{F|\overline{h}}|_j=\frac 1L\mathcal{K}_{F|\bar{0}}(g_{j%
\overline{h}}-\frac 1L\eta _j\overline{\eta }_h),
\]
which multiplied by $g^{\bar{h}j}$, it gets $\frac 1L(n-1)\mathcal{K}_{F|%
\bar{0}}=0.$ Plugging it into (\ref{IV.3}), it follows that $\mathcal{K}_{F|%
\overline{h}}=0,$ i.e. $\frac{\partial \mathcal{K}_F}{\partial \overline{z}^h%
}=0.$ By conjugation, $\frac{\partial \mathcal{K}_F}{\partial z^h}=0$ and
so, $\mathcal{K}_F$ is a constant $c$ on $M$. This implies $K_{\bar{k}j}=%
\frac{c(n+1)}4g_{j\bar{k}}$ and its derivative with respect to $\eta ^l$
leads to $c\;\dot{\partial}_lg_{j\bar{k}}=0$, and hence the last claim.
\end{proof}

\section{Locally projectively flat complex Finsler metrics}

\setlength\arraycolsep{3pt} \setcounter{equation}{0}Using some ideas from
the real case, we shall define the locally projectively flat complex Finsler
metrics.

Let $\tilde{L}$ be a locally Minkowski complex Finsler metric on the
underlying manifold $M.$ Corresponding to the metric $\tilde{L},$ at any
point of $M$ there exist local charts in which the fundamental metric tensor
$\tilde{g}_{i{\bar{j}}}$ depends only on $\eta $ and thus, the spray
coefficients $\tilde{G}^i=0$ and the functions $\tilde{\theta}^{*i}=0$, in
such local charts. The complex Finsler metrics $L$ will be called \textit{%
locally projectively flat} if it is projectively related to the locally
Minkowski metric $\tilde{L}.$ Since the weakly K\"{a}hler property is
preserved under the projective change, any locally projectively flat metric
is weakly K\"{a}hler. We recall Theorem 3.3 from \cite{Al-Mu},

\begin{theorem}
Let $L$ and $\tilde{L}$ be complex Finsler metrics on the manifold $M$.
Then, $L$ and $\tilde{L}$ are projectively related if and only if
\begin{equation}
\frac 12[\dot{\partial}_{\bar{r}}(\delta _k\tilde{L})\eta ^k+2(\dot{\partial}%
_{\bar{r}}G^l)(\dot{\partial}_l\tilde{L})]=P(\dot{\partial}_{\bar{r}}\tilde{L%
})+B^i\tilde{g}_{i\bar{r}}\;;\;r=\overline{1,n},  \label{3.3}
\end{equation}
with $\displaystyle P=\frac 1{2\tilde{L}}[(\delta _k\tilde{L})\eta ^k+\theta
^{*i}(\dot{\partial}_i\tilde{L})]$ and $B^i:=\frac 12(\tilde{\theta}%
^{*i}-\theta ^{*i}).$
\end{theorem}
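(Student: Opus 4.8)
The plan is to establish Theorem 6.1 by specializing Theorem 4.1 (the projective change characterization $\tilde{G}^i = G^i + B^i + P\eta^i$) and then extracting an equivalent tensorial condition by contracting with the vertical derivatives of $\tilde{L}$. First I would recall that $L$ and $\tilde{L}$ are projectively related iff $\tilde{G}^i = G^i + B^i + P\eta^i$ with $B^i = \frac12(\tilde{\theta}^{*i} - \theta^{*i})$ and $P$ a complex-valued smooth function on $T'M$. The natural way to convert this vector equation into a scalar (and hence more manageable) identity is to pair it against the metric $\tilde{g}_{i\bar r}$ associated with $\tilde{L}$; that is, to look at $\tilde{g}_{i\bar r}(\tilde{G}^i - G^i) = \tilde{g}_{i\bar r}B^i + P\tilde{g}_{i\bar r}\eta^i$. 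The key is then to recognize both sides in terms of derivatives of $\tilde{L}$.

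Next I would compute the relevant expressions explicitly. On the right, $\tilde{g}_{i\bar r}\eta^i = \dot{\partial}_{\bar r}\tilde{L}$ by the homogeneity identity $\tilde{L} = \tilde{g}_{i\bar j}\eta^i\bar\eta^j$ together with $\frac{\partial \tilde g_{i\bar j}}{\partial\eta^i}\eta^i$-type relations, so the $P$-term becomes $P\,\dot{\partial}_{\bar r}\tilde{L}$, which is exactly the first term on the right of (6.1). The $B^i$-term stays as $B^i\tilde{g}_{i\bar r}$. On the left, I must re-express $\tilde{g}_{i\bar r}\tilde{G}^i$. Here the idea is that $\tilde{G}^i$, being the spray of $\tilde L$, satisfies $2\tilde{G}^i\tilde g_{i\bar j} = \frac{\partial \tilde g_{h\bar j}}{\partial z^h}\eta^h\eta^s$-style formula (analogous to the one used in the proof of Lemma 2.1), so $\tilde g_{i\bar r}\tilde G^i$ can be written via $\dot{\partial}_{\bar r}(\delta_k\tilde L)\eta^k$; meanwhile $\tilde g_{i\bar r}G^i$ is handled using the chain-rule identity $(\dot{\partial}_{\bar r}G^l)(\dot{\partial}_l\tilde L)$ — this is where the term $2(\dot{\partial}_{\bar r}G^l)(\dot{\partial}_l\tilde L)$ in (6.1) comes from, with the $\theta^{*i}$ contributions absorbed via $\theta^{*k} = 2g^{\bar j k}\stackrel{c}{\delta_{\bar j}}L$ and the definition $B^i = \frac12(\tilde\theta^{*i} - \theta^{*i})$. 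Assembling these gives precisely (6.1). The formula for $P$ is then obtained by contracting the resulting identity with $\eta^{\bar r}$ (equivalently $\bar\eta^r$) and using $B^i\eta_i$-type vanishing together with the $(1,1)$-homogeneity of $\theta^{*i}$: the term $B^i\tilde g_{i\bar r}\bar\eta^r$ vanishes since $\tilde\theta^{*i}\dot{\partial}_i\tilde L$ and $\theta^{*i}$ contractions against $\eta_i$ cancel appropriately, solving for $P = \frac{1}{2\tilde L}[(\delta_k\tilde L)\eta^k + \theta^{*i}(\dot{\partial}_i\tilde L)]$.

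The converse direction is the reverse bookkeeping: assuming (6.1) holds with the stated $P$, one contracts with $\tilde g^{\bar r i}$ (the inverse of $\tilde g_{i\bar r}$) to recover $\tilde G^i - G^i - B^i - P\eta^i = 0$, i.e. the projective change, and invokes Theorem 4.1 in the other direction. I expect the main obstacle to be the careful identification of $\tilde g_{i\bar r}\tilde G^i$ with the combination $\frac12\dot{\partial}_{\bar r}(\delta_k\tilde L)\eta^k$ — this requires using the precise form of the spray coefficients of $\tilde L$ together with several homogeneity relations, and keeping track of which metric ($g$ versus $\tilde g$) and which $\theta^*$ (barred, tilded) appears where. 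The bookkeeping with the mixed holomorphic/anti-holomorphic derivatives $\delta_k$, $\dot{\partial}_{\bar r}$ and the interplay between the canonical $(c.n.c.)$ and the Chern–Finsler data is delicate but entirely mechanical once the correct expansion of $\tilde g_{i\bar r}\tilde G^i$ is in hand; no conceptually new ingredient beyond Theorem 4.1 and the homogeneity identities from \S2 is needed.
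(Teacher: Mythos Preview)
The paper does not prove this statement: Theorem~6.1 is explicitly introduced as ``We recall Theorem~3.3 from~[Al-Mu]'', so there is no proof in the present paper to compare against. Your outline is the natural one and almost certainly matches the argument in the cited manuscript: contracting the projective change $\tilde G^i=G^i+B^i+P\eta^i$ with $\tilde g_{i\bar r}$ and rewriting $(\tilde G^i-G^i)\tilde g_{i\bar r}$ via the identity $2\tilde G^i\tilde g_{i\bar r}=\dot\partial_{\bar r}(\partial_k\tilde L)\eta^k$ together with $\partial_k\tilde L=\delta_k\tilde L+N_k^l\dot\partial_l\tilde L$ and $N_k^l\eta^k=2G^l$ gives exactly the left-hand side of~(6.1); the converse is the contraction with $\tilde g^{\bar r i}$ as you say.

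One small correction to your sketch: when you contract with $\bar\eta^r$ to extract $P$, the term $B^i\tilde g_{i\bar r}\bar\eta^r=B^i\dot\partial_i\tilde L$ does \emph{not} vanish. What happens is that $\tilde\theta^{*i}\dot\partial_i\tilde L=0$ (this is the identity $\theta^{*i}\eta_i=0$ applied to the metric $\tilde L$, which follows from $2\tilde G^i\tilde\eta_i=(\partial_k\tilde L)\eta^k$), while $\theta^{*i}\dot\partial_i\tilde L$ survives; hence $B^i\dot\partial_i\tilde L=-\tfrac12\theta^{*i}\dot\partial_i\tilde L$, which combined with $(\delta_k\tilde L)\eta^k=2(\tilde G^i-G^i)\dot\partial_i\tilde L$ yields the stated formula for $P$. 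With this adjustment your argument is complete.
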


\begin{theorem}
$L$ is locally projectively flat if and only if it is weakly K\"{a}hler and
\begin{equation}
\dot{\partial}_{\bar{r}}(\delta _k\tilde{L})\eta ^k+2(\dot{\partial}_{\bar{r}%
}G^l)(\dot{\partial}_l\tilde{L})=2P(\dot{\partial}_{\bar{r}}\tilde{L})\;;\;r=%
\overline{1,n},  \label{3.2}
\end{equation}
where $P=\frac 1{2\tilde{L}}(\delta _k\tilde{L})\eta ^k.$ Moreover, $%
G^i=-P\eta ^i$.
\end{theorem}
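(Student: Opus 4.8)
The plan is to prove Theorem 6.2 by specializing Theorem 6.1 to the case where $\tilde{L}$ is locally Minkowski. Since $\tilde{L}$ is locally Minkowski, in the appropriate local charts we have $\tilde{G}^i=0$ and $\tilde{\theta}^{*i}=0$. First I would observe that by the remark preceding the theorem, local projective flatness forces the weakly K\"ahler property, so $\theta^{*i}=0$ and hence $B^i=\frac 12(\tilde{\theta}^{*i}-\theta^{*i})=0$. Substituting $\theta^{*i}=0$ and $B^i=0$ into the formula for $P$ in Theorem 6.1 immediately gives $P=\frac 1{2\tilde{L}}(\delta_k\tilde{L})\eta^k$, and substituting $B^i=0$ into the right-hand side of (\ref{3.3}) collapses it to $P(\dot{\partial}_{\bar r}\tilde{L})$; multiplying through by $2$ yields exactly (\ref{3.2}). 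This direction is essentially a direct substitution.

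For the converse, I would assume $L$ is weakly K\"ahler and that (\ref{3.2}) holds with $P=\frac 1{2\tilde{L}}(\delta_k\tilde{L})\eta^k$. Weakly K\"ahler gives $\theta^{*i}=0$, and because $\tilde{L}$ is locally Minkowski we again have $\tilde{\theta}^{*i}=0$, so $B^i=0$. Then (\ref{3.2}) divided by $2$ is precisely condition (\ref{3.3}) of Theorem 6.1 with this $P$ and $B^i=0$, so Theorem 6.1 tells us $L$ and $\tilde{L}$ are projectively related; since $\tilde{L}$ is locally Minkowski, $L$ is locally projectively flat by definition.

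It remains to establish the final assertion $G^i=-P\eta^i$. Here I would use the projective change formula from Theorem 4.1 (or rather its weakly K\"ahler specialization (\ref{9''})): since both $L$ and $\tilde{L}$ are weakly K\"ahler and projectively related, $\tilde{G}^i=G^i+P\eta^i$ for the scalar $P$ appearing above. But $\tilde{L}$ being locally Minkowski means $\tilde{G}^i=0$ in the adapted charts, whence $G^i=-P\eta^i$ immediately. One should double-check that the function $P$ produced by Theorem 6.1 agrees with the $P$ in the projective change decomposition — this is guaranteed by the uniqueness built into the derivation in \S 4, where $P$ was recovered from the trace via (\ref{15}), or more directly by contracting $\tilde{G}^i=G^i+P\eta^i$ with $\eta_i$ and using $\theta^{*i}\eta_i=0$.

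The main obstacle, such as it is, is bookkeeping rather than conceptual: one must be careful that the local charts in which $\tilde{G}^i=0$ and $\tilde{\theta}^{*i}=0$ are the same charts in which the stated identities are being read, and that the scalar $P$ is genuinely globally well-defined (independent of the chart) so that the statement $G^i=-P\eta^i$ makes sense intrinsically. This follows because $P=\frac 1{2\tilde{L}}(\delta_k\tilde{L})\eta^k$ is manifestly a well-defined function on $T'M$ once $\tilde{L}$ is fixed, and the locally Minkowski charts cover $M$. Modulo this verification, the proof is a clean two-line reduction to Theorem 6.1 together with the weakly K\"ahler projective change formula.
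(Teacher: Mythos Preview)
Your proof is correct and matches the paper's approach for the main equivalence: both specialize Theorem 6.1 to the locally Minkowski case, using $\tilde\theta^{*i}=0$ and the weakly K\"ahler condition $\theta^{*i}=0$ to collapse $B^i$ and the formula for $P$.

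For the final assertion $G^i=-P\eta^i$, the paper proceeds differently. Rather than invoking the projective change formula $\tilde G^i=G^i+P\eta^i$ with $\tilde G^i=0$, it works directly from (\ref{3.2}): using the identity $(\delta_k\tilde L)\eta^k=-2G^l(\dot\partial_l\tilde L)$ (valid in the Minkowski chart since $\partial_k\tilde L=0$ there) and differentiating in $\bar\eta^r$, equation (\ref{3.2}) reduces to $-G^l\tilde g_{l\bar r}=P(\dot\partial_{\bar r}\tilde L)$, and contracting with $\tilde g^{\bar r i}$ gives $G^i=-P\eta^i$. Your route via (\ref{9''}) is cleaner and sidesteps that computation, at the cost of implicitly relying on the identification of the $P$ in Theorem 6.1 with the $P$ in the projective change --- which, as you note, is built into how Theorem 6.1 is obtained. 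The paper's computation, on the other hand, shows that $G^i=-P\eta^i$ is actually \emph{equivalent} to (\ref{3.2}) in the Minkowski chart, a slightly stronger conclusion.
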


\begin{proof}
The above equivalence results by Theorem 6.1 in which\textbf{%
\ }$\tilde{L}$ is a locally Minkowski metric on $M$. Taking into account $%
(\delta _k\tilde{L})\eta ^k=-2G^l(\dot{\partial}_l\tilde{L}),$ the condition
(\ref{3.2}) is equivalent to $-G^l\tilde{g}_{l\bar{r}}=P(\dot{\partial}_{%
\bar{r}}\tilde{L})$. By contraction with $\tilde{g}^{\bar{r}i}$, we obtain $%
G^i=-P\eta ^i.$
\end{proof}

\begin{proposition}
If $L$ is locally projectively flat then $G^i=\frac 1{2L}\frac{\partial L}{%
\partial z^k}\eta ^k\eta ^i.$
\end{proposition}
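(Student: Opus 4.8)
The plan is to start from Theorem 6.2, which already does most of the work: a locally projectively flat $L$ is weakly K\"{a}hler and its spray coefficients satisfy $G^{i}=-P\eta^{i}$, where $P=\frac{1}{2\tilde{L}}(\delta_{k}\tilde{L})\eta^{k}$ is the function attached to the auxiliary locally Minkowski metric $\tilde{L}$. The only thing left to do is to re-express $P$ intrinsically in terms of $L$ alone, so that the locally Minkowski companion $\tilde{L}$ disappears from the final formula.

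First I would contract the identity $G^{i}=-P\eta^{i}$ with $\eta_{i}:=\dot{\partial}_{i}L$. By the homogeneity consequence $\eta^{i}\eta_{i}=L$ recorded in \S 2, this yields $G^{i}\eta_{i}=-PL$, so it suffices to evaluate $G^{i}\eta_{i}$ directly.

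Next, I would use the way the spray coefficients are manufactured from the metric: $2G^{i}=N_{j}^{i}\eta^{j}=g^{\overline{m}i}\frac{\partial g_{l\overline{m}}}{\partial z^{j}}\eta^{l}\eta^{j}$, equivalently $G^{i}g_{i\overline{j}}=\frac{1}{2}\frac{\partial g_{h\overline{j}}}{\partial z^{s}}\eta^{h}\eta^{s}$, which is precisely the identity invoked in the proof of Lemma 2.1. Contracting it with $\overline{\eta}^{j}$ and using $\eta_{i}=g_{i\overline{j}}\overline{\eta}^{j}$ together with $L=g_{h\overline{j}}\eta^{h}\overline{\eta}^{j}$ (so that $\frac{\partial L}{\partial z^{s}}=\frac{\partial g_{h\overline{j}}}{\partial z^{s}}\eta^{h}\overline{\eta}^{j}$) gives $G^{i}\eta_{i}=\frac{1}{2}\frac{\partial L}{\partial z^{s}}\eta^{s}$.

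Combining the last two steps yields $-PL=\frac{1}{2}\frac{\partial L}{\partial z^{s}}\eta^{s}$, i.e. $P=-\frac{1}{2L}\frac{\partial L}{\partial z^{s}}\eta^{s}$; substituting this back into $G^{i}=-P\eta^{i}$ produces the asserted formula. There is no genuine obstacle here — it is essentially a two-line computation once Theorem 6.2 is available. The only point requiring a little care is bookkeeping about which metric each object belongs to: $G^{i}$, $N_{j}^{i}$, $\eta_{i}$ and $g_{i\overline{j}}$ all refer to $L$, whereas $\tilde{L}$ enters only through Theorem 6.2 and is eliminated the moment $P$ has been rewritten in terms of $L$.
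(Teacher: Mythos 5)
Your proposal is correct and follows essentially the same route as the paper: both start from $G^i=-P\eta^i$ (Theorem 6.2), contract with $\eta_i$ using the explicit formula $G^i=\frac 12g^{\bar{m}i}\frac{\partial g_{r\bar{m}}}{\partial z^k}\eta ^k\eta ^r$, and deduce $P=-\frac 1{2L}\frac{\partial L}{\partial z^k}\eta ^k$. You merely write out the intermediate identity $G^i\eta_i=\frac 12\frac{\partial L}{\partial z^s}\eta^s$ in more detail than the paper does.
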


\begin{proof}
Since $G^i=\frac 12g^{\bar{m}i}\frac{\partial g_{r\bar{m}}}{\partial z^k}%
\eta ^k\eta ^r$ and $L$ is locally projectively flat, then $\frac 12g^{\bar{m%
}i}\frac{\partial g_{r\bar{m}}}{\partial z^k}\eta ^k\eta ^r=-P\eta ^i.$
Contracting by $\eta _i,$ it leads to $P=-\frac 1{2L}\frac{\partial L}{%
\partial z^k}\eta ^k$ which finishes the proof.
\end{proof}

\begin{proposition}
Let $(M,F)$ be a generalized Berwald space. If $L$ is locally projectively
flat then it is a complex Berwald metric with $W_{j\bar{k}h}^i=0.$
\end{proposition}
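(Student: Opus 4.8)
The plan is to combine Theorem 2.1, Theorem 5.1, and Theorem 5.4 in sequence. First I would observe that under the hypotheses --- $(M,F)$ generalized Berwald and $L$ locally projectively flat --- the metric is automatically weakly K\"{a}hler, since by the definition in \S6 any locally projectively flat metric is projectively related to a locally Minkowski metric and the weakly K\"{a}hler property is preserved by projective changes. Thus a weakly K\"{a}hler and generalized Berwald space is, by Theorem 2.1, a complex Berwald space, which already establishes the first part of the conclusion.

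Next I would extract a Douglas-type vanishing from local projective flatness. Since $L$ is locally projectively flat, it is projectively related to a locally Minkowski metric $\tilde{L}$, for which $\tilde{G}^i=0$, $\tilde{\theta}^{*i}=0$, and (choosing suitable local charts) all the Berwald-type objects $\tilde{G}^i_{jk}$, $\tilde{G}^i_{j\bar k\bar h}$, $\tilde{G}^i_{j\bar k h}$ and the $\theta^*$-derivatives vanish identically. Hence the projective invariants $D^i_{jkh}$, $D^i_{j\bar k\bar h}$, $D^i_{j\bar k h}$ of \eqref{17}, being projective invariants, are the ones computed from $\tilde{L}$, and therefore all three vanish; that is, $(M,F)$ is a complex Douglas space. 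In particular $D^i_{j\bar k h}=0$ and $D^i_{j\bar k\bar h}=0$.

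From $D^i_{j\bar k\bar h}=0$ (or $D^i_{j\bar k h}=0$) together with the weakly K\"{a}hler property, Theorem 5.1 gives that the space is complex Berwald --- consistent with the first step --- and more importantly, by Proposition 4.3 (or 4.2) all the $hv$-, $\bar h\bar v$- and $h\bar v$-Ricci tensors vanish. In fact, in the weakly K\"{a}hler generalized Berwald (hence complex Berwald) setting we are precisely in the situation of \S5 where the invariant $W^i_{j\bar k h}=K^i_{j\bar k h}-\frac{1}{n+1}(K_{\bar k j}\delta^i_h+K_{\bar k h}\delta^i_j)$ of \eqref{120} is defined. To show $W^i_{j\bar k h}=0$ I would invoke Theorem 5.4's converse direction: it suffices to verify $K_{\bar r j\bar k h}=\frac{\mathcal{K}_F}{4}(g_{j\bar k}g_{h\bar m}+g_{h\bar k}g_{j\bar m})$, equivalently that the holomorphic curvature behaves as in a space of constant holomorphic curvature.

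The main obstacle is thus pinning down the curvature of a locally projectively flat complex Berwald metric. Here I would use Theorem 6.2 and Proposition 6.1: locally projective flatness forces $G^i=-P\eta^i$ with $P=\frac{1}{2L}\frac{\partial L}{\partial z^k}\eta^k$, so the canonical nonlinear connection is $\stackrel{c}{N^i_j}=P_j\eta^i+P\delta^i_j$ and hence $L^i_{jk}=G^i_{jk}=P_{jk}\eta^i+P_k\delta^i_j+P_j\delta^i_k$ is of a very special "projectively trivial" form. Feeding this into the formula $K^i_{j\bar k h}=-\delta_{\bar k}L^i_{jh}$ valid for complex Berwald spaces, one computes $K^i_{j\bar k h}=-(P_{jh})_{;\bar k}\eta^i-P_{j;\bar k}\delta^i_h-P_{h;\bar k}\delta^i_j$ (where $;\bar k$ denotes the relevant $\bar\partial$-type derivative), which is manifestly of the shape $A_{\bar k jh}\eta^i+B_{\bar k j}\delta^i_h+B_{\bar k h}\delta^i_j$. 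Tracing on $i=j$ identifies $B_{\bar k h}=\frac{1}{n+1}K_{\bar k h}$ up to the $\eta^i$-term, and then substituting back into \eqref{120} shows the $\delta$-terms cancel and $W^i_{j\bar k h}=0$. Equivalently, once $K^i_{j\bar k h}$ has this algebraic form, formula \eqref{50}--\eqref{51} type reasoning from the proof of Theorem 5.4 shows $K_{\bar k j}=\frac{(n+1)\mathcal{K}_F}{4}g_{j\bar k}$, and then \eqref{120} gives $W^i_{j\bar k h}=0$ directly. The only delicate point is checking that the $\eta^i$-coefficient of $K^i_{j\bar k h}$ is consistent with the trace relation, i.e. that the tensorial identities among the $P$-derivatives coming from $(1,0)$-homogeneity of $P$ (namely $P_k\eta^k=P$, $P_{jk}\eta^k=0$, $P_{jh;\bar k}\eta^j\eta^h=\cdots$) close up correctly --- but this is exactly the computation already performed in \eqref{10'}--\eqref{120}, so no new obstruction arises.
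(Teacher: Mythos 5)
Your argument is correct, and at the decisive step it converges to the paper's own computation. The opening move is identical: local projective flatness forces weak K\"{a}hlerianity (the property is preserved under projective changes and the Minkowski target is K\"{a}hler), and Theorem 2.1 then upgrades generalized Berwald to complex Berwald. For $W_{j\bar{k}h}^i=0$ the paper is more economical than you are: it simply notes that the locally Minkowski target has $\tilde{K}_{j\bar{k}h}^i=\tilde{K}_{\bar{k}h}=0$ and substitutes this into the already-established projective-change relations (\ref{10'}) and (\ref{11}), which immediately give $K_{j\bar{k}h}^i=\frac 1{n+1}(K_{\bar{k}j}\delta _h^i+K_{\bar{k}h}\delta _j^i)$, i.e. $W_{j\bar{k}h}^i=0$ by (\ref{120}). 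Your explicit computation of $K_{j\bar{k}h}^i=-\delta _{\bar{k}}L_{jh}^i$ from $G^i=-P\eta ^i$ reproduces exactly the same algebraic form $A_{\bar{k}jh}\eta ^i+B_{\bar{k}j}\delta _h^i+B_{\bar{k}h}\delta _j^i$ followed by the trace identification, so it is the same argument approached from the other end; the one point you flag as delicate --- the vanishing of the $\eta ^i$-coefficient --- is genuinely needed (the trace alone does not force $W_{j\bar{k}h}^i=0$ otherwise) and is precisely the content of the second line of (\ref{11}), namely $P_{hr|\bar{k}}=0$, so your deferral to that computation is legitimate. The remaining machinery you invoke is redundant: the detour through the Douglas invariants and Theorem 5.1 only re-proves the Berwald property already obtained from Theorem 2.1, and the appeal to the converse direction of Theorem 5.4 is unnecessary since (\ref{120}) is verified directly, without first knowing that $\mathcal{K}_F$ is constant.
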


\begin{proof}
By Theorem 2.1, $L$ is a complex Berwald metric. Since $%
\tilde{K}_{j\bar{k}h}^i=\tilde{K}_{\bar{k}h}=0,$ the relations (\ref{10'})
and (\ref{11}), give $K_{j\bar{k}h}^i=\frac 1{n+1}(K_{\bar{k}j}\delta
_h^i+K_{\bar{k}h}\delta _j^i)$ and so, $W_{j\bar{k}h}^i=0.$
\end{proof}

By Theorem 5.4 we have proved

\begin{theorem}
Let $(M,F)$ be a connected generalized Berwald space of complex dimension $%
n\geq 2.$ If $L$ is locally projectively flat then it is of constant
holomorphic curvature. Moreover, if the constant value of the holomorphic
curvature is non-zero, then $(M,F)$ is a purely Hermitian space.
\end{theorem}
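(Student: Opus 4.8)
The plan is to read the statement off as a direct consequence of Proposition 6.2 and Theorem 5.4. First I would invoke Proposition 6.2: since $(M,F)$ is generalized Berwald and $L$ is locally projectively flat, that proposition already gives that $F$ is a complex Berwald metric and that the Weyl-type projective invariant vanishes, $W_{j\bar{k}h}^i=0$. Implicitly this uses the two facts recorded just before Theorem 6.1 and inside Proposition 6.2 itself, namely that every locally projectively flat metric is weakly K\"ahler, and that a weakly K\"ahler generalized Berwald metric is complex Berwald by Theorem 2.1; together these make the hypotheses of Theorem 5.4 available.

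Next I would feed this into Theorem 5.4, applied to the connected complex Berwald space $(M,F)$ of complex dimension $n\geq 2$. Because $W_{j\bar{k}h}^i=0$, that theorem yields $K_{\bar{m}j\bar{k}h}=\frac{\mathcal{K}_F}{4}(g_{j\bar{k}}g_{h\bar{m}}+g_{h\bar{k}}g_{j\bar{m}})$ and, crucially, that $\mathcal{K}_F$ equals a constant $c$ on $M$. This already establishes the first assertion: a locally projectively flat generalized Berwald space has constant holomorphic curvature.

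For the second assertion I would use the dichotomy in Theorem 5.4: the space is either purely Hermitian with $K_{\bar{k}j}=\frac{c(n+1)}{4}g_{j\bar{k}}$, or non purely Hermitian with $c=0$ and $K_{j\bar{k}h}^i=0$. Contrapositively, if the constant value $c$ of the holomorphic curvature is non-zero, the second branch is excluded, so $(M,F)$ must lie in the first branch, i.e.\ it is purely Hermitian. That finishes the argument.

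I do not anticipate a genuine obstacle here, since the substance is entirely carried by Proposition 6.2 and Theorem 5.4. The only point that deserves care is keeping the implication chain explicit---locally projectively flat $\Rightarrow$ weakly K\"ahler, then together with generalized Berwald $\Rightarrow$ complex Berwald via Theorem 2.1 $\Rightarrow$ the hypotheses of Theorem 5.4 hold---and noting that the dimension restriction $n\geq 2$ is precisely what Theorem 5.4 requires, so nothing extra is needed.
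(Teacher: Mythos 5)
Your proposal is correct and follows exactly the paper's route: the authors state this theorem immediately after Proposition 6.2 with the one-line justification ``By Theorem 5.4 we have proved'', i.e.\ Proposition 6.2 supplies that $(M,F)$ is complex Berwald with $W_{j\bar{k}h}^i=0$, and Theorem 5.4 then yields the constancy of $\mathcal{K}_F$ and the dichotomy that forces the space to be purely Hermitian when the constant is non-zero.
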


Next we study as an application the weakly K\"{a}hler complex Finsler
metrics $L$ with the spray coefficients $G^i=\rho _r\eta ^r\eta ^i,$ where $%
\rho $ is a smooth complex function depending only on $z\in M$, $\rho _r:=%
\frac{\partial \rho }{\partial z^r}$ and $\rho _{r\bar{h}}:=\frac{\partial
\rho _r}{\partial \bar{z}^h}$ is Hermitian, i.e. $\overline{\rho _{r\bar{h}}}%
=\rho _{h\bar{r}},$ and it is nondegenerated.

\begin{theorem}
Let $(M,F)$ be a weakly K\"{a}hler complex Finsler space with $G^i=\rho
_r\eta ^r\eta ^i.$ Then

i) $L$ is locally projectively flat;

ii) $L$ is a complex Berwald metric;

iii) $L$ is a purely Hermitian metric of non-zero constant holomorphic
curvature $\mathcal{K}_F=-\frac 4L\rho _{r\bar{h}}\eta ^r\bar{\eta}^h.$

iv) $\rho $ satisfies the system of partial differential equations
\begin{equation}
\rho _{r\bar{h}k}=\rho _r\rho _{k\bar{h}}+\rho _k\rho _{r\bar{h}},
\label{3.5}
\end{equation}
where $\rho _{r\bar{h}k}:=\frac{\partial \rho _{r\bar{h}}}{\partial z^k}=%
\frac{\partial \rho _{k\bar{h}}}{\partial z^r}=\frac{\partial \rho _{rk}}{%
\partial \bar{z}^h}$ and $\rho _{r\bar{h}k}=\rho _{k\bar{h}r}$
\end{theorem}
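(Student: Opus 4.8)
The plan is to verify the four claims in the natural logical order, since each feeds the next. First I would establish (i) by checking that the given spray coefficients $G^i = \rho_r \eta^r \eta^i$ fit the locally projectively flat criterion of Theorem 6.2. Since $\rho$ depends only on $z$, the function $\rho_r \eta^r$ is $(1,0)$-homogeneous in $\eta$, so $G^i$ has the form $-P\eta^i$ with $P := -\rho_r\eta^r$; by Theorem 6.2 (and Proposition 6.1, which fixes $P = -\frac{1}{2L}\frac{\partial L}{\partial z^k}\eta^k$) this already forces the structure of a locally projectively flat metric, provided I also confirm that the compatibility equation \eqref{3.2} holds. The key point is that, because $G^i$ is linear in $\eta$ up to the common factor $\eta^i$, the associated canonical nonlinear connection and the Berwald-type objects $G^i_{jk}$ depend only on $z$.

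For (ii), I would compute $G^i_{jk} = \dot{\partial}_k \dot{\partial}_j G^i$ directly from $G^i = \rho_r\eta^r\eta^i$: differentiation in $\eta$ gives $\stackrel{c}{N^i_j} = \rho_j\eta^i + \rho_r\eta^r\delta^i_j$ and then $G^i_{jk} = \rho_j\delta^i_k + \rho_k\delta^i_j$, which depends only on $z$. Hence the space is generalized Berwald, and combined with the standing weakly Kähler hypothesis, Theorem 2.1 gives that $(M,F)$ is complex Berwald. This establishes (ii) and, via Proposition 6.2, also yields $W^i_{j\bar k h} = 0$, which sets up (iii).

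For (iii) I would invoke Theorem 5.4: a connected complex Berwald space with $W^i_{j\bar k h}=0$ is either purely Hermitian of constant holomorphic curvature or non purely Hermitian with $\mathcal{K}_F = 0$. To rule out the second alternative and pin down the curvature value I would compute $K^i_{j\bar k h} = -\stackrel{c}{\delta_{\bar k}}G^i_{jh}$; using $\dot{\partial}_{\bar k}$ on $\stackrel{c}{N^i_j}$ and the fact that $\rho_j$ is holomorphic in $z$ one gets $K^i_{j\bar k h}$ expressed through $\rho_{r\bar h}$, and contracting as in \eqref{1.8} gives $\mathcal{K}_F = -\frac{4}{L}\rho_{r\bar h}\eta^r\bar\eta^h$. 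Since $\rho_{r\bar h}$ is Hermitian and nondegenerate, this is a nonzero constant-type expression of the required form, and the nondegeneracy forces the purely Hermitian alternative in Theorem 5.4. Finally, (iv) should follow by writing out the Chern–Finsler / Berwald curvature identity (e.g. the Bianchi-type relation $K^i_{j\bar r k | \bar h} = K^i_{j\bar h k|\bar r}$ from \eqref{111}, or more directly the flatness equation forcing $K^i_{j\bar k h}$ to have the constant-curvature model form $\frac{\mathcal{K}_F}{4}(g_{j\bar k}\delta^i_h + g_{h\bar k}\delta^i_j)$ proved in Theorem 5.4) and re-expressing it in terms of $\rho$: the mixed third derivatives $\rho_{r\bar h k}$ of $\rho$ must match the quadratic combination $\rho_r\rho_{k\bar h} + \rho_k\rho_{r\bar h}$, which is \eqref{3.5}.

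I expect the main obstacle to be (iii)–(iv): one must carefully relate the purely position-dependent tensor $\rho_{r\bar h}$ to the fundamental metric $g_{j\bar k}$. Since the metric is now purely Hermitian, $g_{j\bar k} = g_{j\bar k}(z)$ and the identification $K_{\bar k j} = \frac{c(n+1)}{4} g_{j\bar k}$ from Theorem 5.4 links $\rho_{r\bar h}$ (up to a constant) to $g_{r\bar h}$; verifying that this is consistent — i.e. that a nondegenerate Hermitian $\rho_{r\bar h}$ arising from a single potential-like function $\rho$ genuinely produces a metric of this type and that the curvature constant is exactly $-\frac{4}{L}\rho_{r\bar h}\eta^r\bar\eta^h$ rather than merely proportional — is the delicate bookkeeping. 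Equation \eqref{3.5} is then the integrability condition that this whole picture is self-consistent, and deriving it cleanly from the constant-curvature form of $K_{\bar m j\bar k h}$ is the computational heart of the argument.
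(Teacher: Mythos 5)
Your plan follows essentially the same route as the paper's proof: verifying the projective-flatness criterion of Theorem 6.2 via $\dot{\partial}_{\bar r}G^l=0$ and $P=-\rho_r\eta^r$, computing $G^i_{jk}=\rho_j\delta^i_k+\rho_k\delta^i_j$ to get generalized (hence, by Theorem 2.1, complex) Berwald, obtaining $K^i_{j\bar kh}=-\rho_{j\bar k}\delta^i_h-\rho_{h\bar k}\delta^i_j$ and the curvature formula, and deriving \eqref{3.5} from the proportionality $g_{r\bar h}\propto\rho_{r\bar h}$ combined with $\delta_kg_{j\bar m}=\rho_kg_{j\bar m}+\rho_jg_{k\bar m}$. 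The steps you leave as ``to confirm'' (the explicit check of \eqref{3.2} and the bookkeeping for \eqref{3.5}) are exactly the computations the paper carries out, and they go through as you anticipate.
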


\begin{proof}
In order to prove i), we use Theorem 6.2. Let $\tilde{L}$ be
a locally Minkowski metric on $M.$ Since $L$ is weakly K\"{a}hler, we must
show only that the equation (\ref{3.2}) is satisfied. Indeed, we have $\dot{%
\partial}_{\bar{r}}G^l=0,$ $(\delta _k\tilde{L})\eta ^k=-2G^l(\dot{\partial}%
_l\tilde{L})=-2\rho _r\eta ^r\eta ^l(\dot{\partial}_l\tilde{L})=-2\tilde{L}%
\rho _r\eta ^r,$ and so $\dot{\partial}_{\bar{r}}(\delta _k\tilde{L})\eta
^k=-2(\dot{\partial}_{\bar{r}}\tilde{L})\rho _l\eta ^l,$ which implies the
equation (\ref{3.2}).

Since $\dot{\partial}_{\bar{r}%
}G^l=0$, $L$ is generalized Berwald. Thus, Theorem 2.1 yields ii).

iii) Theorem 6.3 together with i) and ii) show that $W_{j\bar{k}h}^i=0$ and $%
L$ is of constant holomorphic curvature. Since $L$ is a complex Berwald
metric, $\delta _{\bar{k}}=\stackrel{c}{\delta }_{\bar{k}}$ and $%
L_{jh}^i=G_{jh}^i.$ Hence $K_{j\bar{k}h}^i=-\delta _{\bar{k}}L_{jh}^i$, which
will be rewritten in terms of derivatives of $\rho .$ Indeed, two successive
differentiations of the equations $G^i=\rho _r\eta ^r\eta ^i$ lead to
\begin{equation}
L_{jk}^i=\rho _k\delta _j^i+\rho _j\delta _k^i\;.  \label{3.5'}
\end{equation}
Consequently,
\[
K_{j\bar{k}h}^i=-\rho _{j\bar{k}}\delta _h^i-\rho _{h\bar{k}}\delta _j^i
\]
which gives $K_{\bar{r}j\bar{k}h}=-\rho _{j\bar{k}}g_{h\bar{r}}-\rho _{h\bar{%
k}}g_{j\bar{r}}$ and so,
\begin{equation}
\mathcal{K}_F=-\frac 4L\rho _{r\bar{h}}\eta ^r\bar{\eta}^h.  \label{3.6}
\end{equation}

Since $\rho _{r\bar{h}}$ is nondegenerated, $\mathcal{K}_F\neq 0$ and by
Theorem 6.3 it results that $L$ is a purely Hermitian metric.

iv) To establish the system (\ref{3.5}) we use (\ref{3.6}). This implies
\begin{equation}
L=-\frac 4{\mathcal{K}_F}\rho _{r\bar{h}}\eta ^r\bar{\eta}^h=g_{r\bar{h}%
}\eta ^r\bar{\eta}^h,  \label{3.7}
\end{equation}
which gives
\begin{equation}
g_{r\bar{h}}=-\frac 4{\mathcal{K}_F}\rho _{r\bar{h}}\;\mbox{and}\;\delta
_kg_{r\bar{h}}=-\frac 4{\mathcal{K}_F}\rho _{r\bar{h}k}.  \label{3.8}
\end{equation}
Now, using (\ref{1.3}) and (\ref{3.5'}) it results
\begin{equation}
\delta _kg_{j\bar{m}}=\rho _kg_{j\bar{m}}+\rho _jg_{k\bar{m}}  \label{3.9}
\end{equation}
The substitution of (\ref{3.8}) into (\ref{3.9}) implies (\ref{3.5}).
Moreover, the K\"{a}hler property of $L$ gives $\rho _{r\bar{h}k}=\rho _{k%
\bar{h}r}.$
\end{proof}

\textbf{Acknowledgment:} The first author is supported by the Sectorial
Operational Program Human Resources Development (SOP HRD), financed from the
European Social Fund and by Romanian Government under the Project number
POSDRU/89/1.5/S/59323.

\begin{flushleft}
Transilvania Univ., Faculty of Mathematics and Informatics

Iuliu Maniu 50, Bra\c{s}ov 500091, ROMANIA

e-mail: nicoleta.aldea@lycos.com

e-mail: gh.munteanu@unitbv.ro
\end{flushleft}

\end{document}